\def\ra{\rightarrow}
\def\longra{\longrightarrow}
\def\iff{\Longleftrightarrow}
\newcommand{\nobra}[1]{#1^{\uparrow}}
\newcommand{\eqnum}{\refstepcounter{equation}\textup{\tagform@{\theequation}}}
\def\dom{{\mathbf d}}
\def\curlyD{{\mathcal D}}
\def\I{{\mathscr I}}
\def\J{{\mathcal J}}
\def\N{{\mathbb N}}
\def\P{{\mathcal P}}
\def\R{{\mathcal R}}
\def\ran{{\mathbf r}}
\def\relrho{\, \rho \,}
\def\varep{\varepsilon}
\def\st{\operatorname{star}}
\def\fim{\operatorname{FIM}}
\def\id{\operatorname{id}}
\def\tr{\operatorname{tr}}
\def\<{\langle}
\def\>{\rangle}
\def\leq{\leqslant}
\def\geq{\geqslant}
\newcommand{\twobar}{/\kern-0.3em/}
\newcommand{\mat}[2]{\left( \begin{array}{cc} 1 & 2 \\ #1 & #2 \end{array} \right)}
\numberwithin{equation}{section}
\newtheorem{theorem}{Theorem}[section]
\newtheorem{prop}[theorem]{Proposition}
\newtheorem{lemma}[theorem]{Lemma}
\newtheorem{cor}[theorem]{Corollary}
\theoremstyle{definition}
\newtheorem{example}[theorem]{Example}
\newtheorem{remark}[theorem]{Remark}
\numberwithin{equation}{section}
\title{Ordered groupoid quotients and congruences on inverse semigroups}
\author{Nouf AlYamani}
\address{Kingdom of Saudi Arabia, Ministry of Higher Education,\\University of Dammam,\\P.O.Box 1982, Dammam 31441, Saudi Arabia.\\naalyamani@uod.edu.sa}
\author{N. D. Gilbert}
\address{
School of Mathematical and Computer Sciences\\
and the Maxwell Institute for the Mathematical Sciences,\\
Heriot-Watt University, \\Edinburgh EH14 4AS, U.K\\N.D.Gilbert@hw.ac.uk}
	\thanks{The authors gratefully  acknowledges the support of a research grant from the University of Dammam.
}
\date{}
\begin{document}

\begin{abstract}
We introduce a preorder on an inverse semigroup $S$ associated to any normal inverse subsemigroup $N$, that
lies between the natural partial order and Green's $\J$--relation.  The corresponding equivalence relation $\simeq_N$ is not necessarily
a congruence on $S$, but the quotient set does inherit a natural ordered groupoid structure.  We show that this construction permits the
factorisation of any inverse semigroup homomorphism into a composition of a quotient map and a star-injective functor, and that
this decomposition implies a classification of congruences on $S$.  We give an application to the congruence and certain normal inverse subsemigroups associate to an inverse monoid presentation.
\end{abstract}

\subjclass[2010]{Primary: 20L05; Secondary 18D15,18B40}

\keywords{inverse semigroup, congruence, normal subsemigroup}

\maketitle

\section*{Introduction}
Let $S$ be an inverse semigroup with semilattice of idempotents $E(S)$.
Recall that the \emph{natural partial order} on $S$ is defined by
\[ s \leq t \; \iff \; \text{there exists} \; e \in E(S) \; \text{with} \; s=et \,.\]
The natural partial order may be characterized in a number of alternative ways, including:
\begin{itemize}
\item there exists $f \in E(S)$ with $s=tf$, 
\item $s=ss^{-1}t$, 
\item $s=ts^{-1}s$.
\end{itemize}
(see \cite[Proposition 5.2.1]{HoBook}).  In this paper, we shall generalize the natural partial order by introducing a preorder $\leq_N$
on $S$ for any normal inverse subsemigroup $N$: the natural partial order then corresponds to the minimal
normal inverse subsemigroup $E(S)$, and at the other extreme, the preorder associated to $S$ itself is the $\J$--preorder.
Symmetrizing the preorder $\leq_N$ yields an equivalence relation $\simeq_N$ (the identity when $N=E(S)$ and the
$\J$--relation when $N=S$).  However, this relation need not be a congruence, and so the set of equivalence classes $S/ \simeq_N$ need
not be an inverse semigroup.

However, we may investigate $\simeq_N$ further by exploiting the relationship between inverse semigroups and ordered groupoids.
An ordered groupoid is a small category in which every morphism is invertible, equipped with a partial order on morphisms.
(The definition is recalled in detail in section \ref{prelims}.)  An inverse semigroup can be considered as an ordered groupoid
in which the identities form a semilattice, and from any such ordered groupoid a corresponding inverse semigroup can be 
constructed.  In the study of inverse semigroups, it is often fruitful to extend the point of view to ordered groupoids, and this is
a major theme of \cite {LwBook}.   We show that the quotient set $S/ \simeq_N$ always inherits a natural ordered groupoid structure.
Moreover, to any homomorphism $\phi : S \ra \Sigma$ of inverse semigroups, we associate its kernel
$K = \{ s \in S : s \phi \in E(\Sigma) \}$, and $\phi$ then factorises as
\[ S \longra S/ \simeq_K \longra \Sigma \]
with the map $S / \simeq_K \ra \Sigma$ a star-injective functor from the ordered groupoid $S / \simeq_K$ to $\Sigma$
(considered as an ordered groupoid).

Now any congruence $\rho$ on an inverse semigroup determines a normal inverse subsemigroup $K$, its \emph{kernel},
which consists of all elements of $S$ that are $\rho$--equivalent to idempotents.  We compare the structures of the 
ordered groupoid $S / \simeq_K$ and the quotient inverse semigroup $S/ \, \rho$, and we show that if $\simeq_K$ is a 
congruence, then it is the minimal congruence with kernel $K$.  We show how to classify congruences by the factorisation
\[ S \longra S / \simeq_K \longra S /  \rho \]
and look at certain congruences and their kernels associated with an inverse monoid presentation, and the relationships
between them.

\section{Ordered groupoids and inverse semigroups}
\label{prelims}
A {\em groupoid} $G$ is a small category in which every morphism
is invertible.  We consider a groupoid as an algebraic structure
following \cite{HiBook}: the elements are the morphisms, and
composition is an associative partial binary operation.
The set of identities in $G$ is denoted $E(G)$, and an element $g \in G$ has domain
$gg^{-1}$ and range $g^{-1}g$.    

An {\em ordered groupoid} $(G,\leq)$ is a groupoid $G$
with a partial order $\leq$ satisfying the following axioms:
\begin{enumerate}
\item[OG1] for all $g,h \in G$, if $g \leq h$ then $g^{-1} \leq h^{-1}$,
\item[OG2] if $g_1 \leq g_2 \,, h_1 \leq h_2$ and if the compositions
$g_1h_1$ and $g_2h_2$ are   defined, then $g_1h_1 \leq g_2h_2$,
\item[OG3] if $g \in G$ and $x$ is an identity of $G$ with $x \leq
g \dom$, there exists a unique element $(x|  g)$, called the {\em restriction}
of $g$ to $x$, such that $(x|g)(x|g)^{-1} =x$ and $(x|g) \leq g$,
\end{enumerate}
As a consequence of [OG3] we also have:
\begin{enumerate}
\item[OG3*] if $g \in G$ and $y$ is an identity of $G$ with $y \leq
g \ran$, there exists a unique element $(g|  y)$, called the {\em corestriction}
of $g$ to $y$, such that $(g|y)^{-1}(g|y)=y$ and $(g| y) \leq g$,
\end{enumerate}
since the corestriction of $g$ to $y$ may be defined as $(y |  g^{-1})^{-1}$.

Let $G$ be an ordered groupoid and let $a,b \in G$.  If 
$a^{-1}a$ and $bb^{-1}$ have a greatest lower bound $\ell \in E(G)$, then we may define the
{\em pseudoproduct} of $a$ and $b$ in $G$ as
$a \otimes b = (a | \ell)(\ell|  b)$,
where the right-hand side is now a composition defined in $G$. As 
Lawson shows in Lemma 4.1.6 of \cite{LwBook}, this is a partially
defined associative operation on $G$. 

If $E(G)$ is a meet semilattice then $G$ is called an \emph{inductive} groupoid. The pseudoproduct  is then everywhere defined and
$(G,\otimes)$ is an inverse semigroup.  On the other hand, given an inverse semigroup $S$ with
semilattice of idempotents $E(S)$, then $S$ is a poset under the natural partial order, and the
restriction of its multiplication to the partial composition
\[ s \cdot t = st \in S \; \text{defined when} \; s^{-1}s=tt^{-1} \]
gives $S$ the structure of an ordered groupoid, with set of identities $E(S)$.  These constructions 
give an isomorphism between the categories of inverse semigroups and inductive groupoids: this is the 
\emph{Ehresmann-Schein-Nambooripad Theorem} \cite[Theorem 4.1.8]{LwBook}.
We call a product $st \in S$ with $s^{-1}s = tt^{-1}$ a \emph{trace product}.
Any product in $S$ can be expressed as a trace product, at the expense of changing the factors, since $st = stt^{-1} \cdot s^{-1}st$.

Let $e \in E(G)$.  Then the {\em star} of $e$ in $G$ is the set
$\st_G(e) = \{ g \in G : gg^{-1} =e \}$. 
A functor
$\phi : G \ra H$ is said to be {\em star-injective} if, for each $e \in E(G)$,
the restriction $\phi : \st_G(e) \ra \st_H(e \phi)$ is injective.
A star-injective functor is also called an \emph{immersion}.  If $G$ is inductive, then $\st_G(e)$ is just the Green
$\R$--class of $e$ in the inverse semigroup $(G,\otimes)$.

\section{Normal inverse subsemigroups and quotients}
An inverse subsemigroup $N$ of an inverse semigroup $S$ is \emph{normal} \cite{Pe} if it is full -- that is, if $E(N)=E(S)$ -- and if, for all
$s \in S$ and $n \in N$, we have $s^{-1}ns \in N$.   A normal inverse subsemigroup $N$ of $S$ determines a relation $\leq_N$ on $S$,
defined using the natural partial order $\leq$ on $S$, as follows:
\begin{equation}
\label{leq_def}
 s \leq_N t \; \iff \; \text{there exist} \; a,b \in N \; \text{such that} \; a \cdot s \cdot b \leq t \,. \end{equation}
Note the requirement that trace products occur here.  We define the relation $\simeq_N$ by symmetrizing $\leq_N$:
\begin{equation}
\label{simeq_def}
 s \simeq_N t \; \iff \; \text{there exist} \; a,b,c,d \in N \; \text{such that} \; a \cdot s \cdot b \leq t \; \text{and} \;
c \cdot t \cdot d \leq s  \end{equation}

\begin{lemma} \leavevmode
\label{leq_props}
\begin{enumerate}
\item The relation $\leq_{E(S)}$ is the natural partial order $\leq$ on $S$.
\item The relation $\leq_S$ is the $\J$--preorder $\preceq_{\J}$ on $S$.
\item If $s \leq t$ in the natural partial order on $S$ then $s \leq_N t$ for any normal inverse subsemigroup $N$ of $S$.
\item $s \leq_N e$ for some $e \in E(S)$ if and only if  $s \in N$.
\item If $s \leq_N s^2$ then $s \in N$.
\item If $s \leq_N t$ then $st^{-1} \in N$.
\item The relation $\leq_N$ is a preorder on $S$ and hence $\simeq_N$
is an equivalence relation on $S$.
\end{enumerate}
\end{lemma}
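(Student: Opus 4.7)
The plan is to establish the seven parts in order, exploiting a single identity forced by the trace product conditions $a^{-1}a = ss^{-1}$ and $bb^{-1} = s^{-1}s$: one always has $s = a^{-1}(a\cdot s\cdot b) b^{-1}$, and when in addition $a\cdot s\cdot b \leq t$ the domain idempotent $(asb)(asb)^{-1}$ collapses to $aa^{-1}$, so that $asb = aa^{-1}t$ and we obtain the cleaner identity $s = a^{-1}tb^{-1}$.

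Parts (a), (c), (d) then go through immediately. For (a) and (c), specialising $a = ss^{-1}$ and $b = s^{-1}s$ (which lie in $E(S)\subseteq N$ by fullness) makes the trace products legal and gives $a\cdot s\cdot b = s$, so $\leq_{E(S)}$ coincides with $\leq$, and every $\leq_N$ refines $\leq$. For (d), if $s \leq_N e$ with $e\in E(S)$ then $asb$ is bounded by an idempotent and hence is itself idempotent, so $asb \in E(S) \subseteq N$, and $s = a^{-1}(asb)b^{-1}\in N$ by closure; the converse uses $a = s^{-1}$, $b = e = s^{-1}s$. For (b), the forward direction is immediate from $s = a^{-1}etb^{-1}\in StS$. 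For the converse I would write $s = utv$ with $u, v \in S^1$, verify $ss^{-1} = u(tvv^{-1}t^{-1})u^{-1} \leq uu^{-1}$ and dually $s^{-1}s \leq v^{-1}v$, and then check that $a = u^{-1}(ss^{-1})$ and $b = (s^{-1}s)v^{-1}$ yield a valid trace product with $a\cdot s\cdot b = u^{-1}sv^{-1} = (u^{-1}u)t(vv^{-1}) \leq t$.

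For (f), the clean identity yields $st^{-1} = a^{-1}(tb^{-1}t^{-1})$; here $a^{-1}\in N$, and $tb^{-1}t^{-1}\in N$ by normality, so their product lies in $N$. The most delicate verification is (e). Specialising $t = s^2$ gives $s = a^{-1}s^2 b^{-1} = (a^{-1}s)(sb^{-1})$. Setting $n_1 = s^{-1}a^{-1}s \in N$ and $n_2 = sb^{-1}s^{-1} \in N$ (both by normality), the trace product conditions collapse $sn_1 = (ss^{-1})a^{-1}s = (a^{-1}a)a^{-1}s = a^{-1}s$ and dually $n_2 s = sb^{-1}$. Hence $s = (sn_1)(n_2 s) = s(n_1 n_2)s$, and a direct computation gives
\[ n_1 n_2 = s^{-1}(a^{-1}s^2 b^{-1})s^{-1} = s^{-1}\cdot s \cdot s^{-1} = s^{-1}. \]
Since $n_1 n_2 \in N$, we conclude $s^{-1}\in N$ and hence $s\in N$. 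I expect this step -- recognising that the element conjugating $s^2$ back to $s$ through $N$ is forced to equal $s^{-1}$ -- to be the main (if ultimately brief) obstacle.

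Finally (g). Reflexivity uses $a = ss^{-1}$, $b = s^{-1}s$. For transitivity, from $a_1 s b_1 \leq t$ and $a_2 t b_2 \leq u$ I take $a = a_2 a_1$ and $b = b_1 b_2$, both in $N$. The trace product $a\cdot s$ still holds: from $a_2^{-1}a_2 = tt^{-1}$ and $a_1 a_1^{-1}\leq tt^{-1}$ (the latter since $a_1 s b_1 \leq t$ forces its domain $\leq tt^{-1}$), a short computation gives $(a_2 a_1)^{-1}(a_2 a_1) = a_1^{-1}(a_2^{-1}a_2)a_1 = a_1^{-1}a_1 = ss^{-1}$; the symmetric calculation yields $(b_1 b_2)(b_1 b_2)^{-1} = s^{-1}s$. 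Order preservation then supplies $a\cdot s\cdot b = a_2(a_1 s b_1)b_2 \leq a_2 t b_2 \leq u$, so $s\leq_N u$. That $\simeq_N$ is an equivalence is then immediate from the symmetry of its definition together with the preorder properties of $\leq_N$.
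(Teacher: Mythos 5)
Your proposal is correct and follows essentially the same route as the paper: the ``clean identity'' $s=a^{-1}tb^{-1}$ you extract is exactly the computation the paper performs in parts (d)--(f), and your arguments for (b), (e) and (g) reproduce the paper's witnesses and trace-product verifications (for the converse of (b) the paper works with an element $p$ satisfying $pp^{-1}\leq tt^{-1}$ and $p^{-1}p=ss^{-1}$ rather than with $s\in S^1tS^1$, but these are equivalent descriptions of $\preceq_{\J}$, and your part (e) computation $n_1n_2=s^{-1}$ is the paper's identity $s^{-1}=(s^{-1}a^{-1}s)(sb^{-1}s^{-1})$ in different packaging). One terminological slip: in (c) you say every $\leq_N$ ``refines'' $\leq$ when you mean it \emph{contains} $\leq$ --- the mathematics ($s\leq t\Rightarrow s\leq_N t$ via $a=ss^{-1}$, $b=s^{-1}s$) is right.
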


\begin{proof} 
\begin{enumerate}
\item This is clear, since if $e,f \in E(S)$, a trace product $e \cdot s \cdot f$ is equal to $s$.
\item If $a \cdot s \cdot b \leq t$ then $aa^{-1} \leq tt^{-1}$ and $a^{-1}a=ss^{-1}$.  Hence
$s \preceq_{\J} t$.  On the other hand, if $s \preceq_{\J} t$ then there exists $p \in S$ with $pp^{-1} \leq tt^{-1}$
and $p^{-1}p=ss^{-1}$.  Then
\[ p \cdot s \cdot (s^{-1} \cdot p^{-1} \cdot pp^{-1}t) \leq t \]
and so $s \leq_S t$.
\item Since $N$ is full, $ss^{-1}, s^{-1}s \in N$ and $s = ss^{-1} \cdot s \cdot s^{-1}s \leq t$.
\item Suppose that $a,b \in N$ with $a \cdot s \cdot b \leq e$.  Therefore
$a \cdot s \cdot b = f \leq e$ with $f \in E(S)$, and then $s = a^{-1}a \cdot s \cdot bb^{-1} = a^{-1}fb^{-1} \in N$.
Conversely, if $n \in N$ then $nn^{-1} \cdot n \cdot n^{-1} = nn^{-1}$ and so $n \leq_N nn^{-1}$.
\item We have $a,b \in N$ with $a \cdot s \cdot b \leq s^2$ and so $s \leq a^{-1}s^2b^{-1}$.  Therefore
$s =ss^{-1}a^{-1}s^2b^{-1}$ and it follows that
\[ s^{-1} = s^{-1}ss^{-1} = s^{-1}ss^{-1}a^{-1}s^2b^{-1}s^{-1} = (s^{-1}a^{-1}s)(sb^{-1}s^{-1}) \in N \]
and so $s \in N$.
\item If $a \cdot s \cdot b \leq t$ then $s = a^{-1}a \cdot s \cdot bb^{-1} \leq a^{-1}tb^{-1}$ and so
$st^{-1} \leq a^{-1}(tb^{-1}t^{-1}) \in N$.  Since $N$ is full, we deduce that $st^{-1} \in N$.
\item It is clear that $\leq$ is reflexive.  Suppose that $s,t,u \in S$ and that $s \leq_N t \leq_N u$.
There exist $a,b,p,q \in N$ such that $a \cdot s \cdot b \leq t$ and $p \cdot t \cdot q \leq u$.  Then
$(pa)s(bq) \leq u$, and $(pa)s(bq)$ is the trace product $(pa) \cdot s \cdot(bq)$ since
\begin{align*}
(pa)^{-1}(pa) &= a^{-1}p^{-1}pa = a^{-1}tt^{-1}a = a^{-1}a = ss^{-1},
\end{align*}
and $aa^{-1} \leq tt^{-1}$. Similarly $s^{-1}s = (bq)(bq)^{-1}$.  Therefore $(pa) \cdot s \cdot (bq) \leq u$
and $s \leq_N u$.
\end{enumerate}
\end{proof}

\begin{cor}
\label{posets}
The normal inverse subsemigroup $N$ is determined by the preorder $\leq_N$, and we obtain an order-preserving
embedding of the poset of normal inverse subsemigroups of $S$ into the poset of preorders on $S$ that contain the
natural partial order.
\end{cor}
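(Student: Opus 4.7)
The plan is to extract everything from Lemma \ref{leq_props}; no new construction is needed. The single key observation is that part (d) of that lemma furnishes an explicit formula recovering $N$ from its associated preorder, namely
\[ N = \{ s \in S : s \leq_N e \text{ for some } e \in E(S) \}. \]
Once this is recorded, both the injectivity of $N \mapsto {\leq_N}$ and the order-reflecting property of the map will drop out together.

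First I would observe that $\leq_N$ is indeed a preorder on $S$ (Lemma \ref{leq_props}(g)) and that it contains the natural partial order (Lemma \ref{leq_props}(c)), so the assignment $N \mapsto {\leq_N}$ does land in the poset of preorders containing $\leq$, with both posets ordered by containment of relations.

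For order-preservation, suppose $N \subseteq M$ are normal inverse subsemigroups. If $s \leq_N t$, a trace-product witness $a \cdot s \cdot b \leq t$ with $a,b \in N$ still lies in $M$, so $s \leq_M t$; hence $\leq_N$ is contained in $\leq_M$ as a subset of $S \times S$.

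For the embedding (order-reflection plus injectivity), suppose $\leq_N\, \subseteq\, \leq_M$. Given $n \in N$, the ``conversely'' half of Lemma \ref{leq_props}(d) gives $n \leq_N nn^{-1}$, hence $n \leq_M nn^{-1}$, and now the forward half of Lemma \ref{leq_props}(d) applied to $M$ forces $n \in M$. Thus $N \subseteq M$, so equality of the preorders forces equality of the subsemigroups. There is no real obstacle here; the work was already done in Lemma \ref{leq_props}(d), and the corollary is merely packaging that recovery formula as a statement about the two posets.
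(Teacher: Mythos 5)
Your proposal is correct and follows the same route as the paper: the paper's entire proof is the recovery formula $N = \{ s \in S : s \leq_N e \text{ for some } e \in E(S)\}$ from Lemma \ref{leq_props}(d), which is exactly your key observation. You simply spell out the order-preservation and order-reflection steps that the paper leaves implicit.
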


\begin{proof}
Part (d) of Lemma \ref{leq_props} shows that
\[ N = \{ s \in S : \text{there exists} \; e \in E(S) \; \text{with} \; s \leq_N e \}.\]
\end{proof}

\begin{remark}
Not every preorder containing the natural partial order arises from a normal inverse subsemigroup.  Consider the
symmetric inverse monoid $\I_n$ and define a preorder $\preceq$ by
$\alpha \preceq \beta \; \Longleftrightarrow \dom(\alpha) \subseteq \dom{\beta}$
where $\dom(\gamma)$ is the domain of $\gamma \in \I_n$.  Then $\alpha \preceq \id$ for all $\alpha \in \I_n$,
and so the normal inverse subsemigroup associated to $\preceq$ is $I_n$ itself, but $\preceq$ is not the 
$\J$--preorder on $\I_n$ and so is not equal to $\leq_{\I_n}$.
\end{remark}

We denote the $\simeq$--class of $s \in S$ by $[s]_N$.

\begin{prop} \leavevmode
\label{simeq_props}
\begin{enumerate}
\item If $n \in N$ then $nn^{-1} \simeq_N n \simeq_N n^{-1} \simeq_N n^{-1}n$,
\item The equivalence relation $\simeq_N$ saturates $N$,
\item The equivalence relation $\simeq_N$ determines $N$ as
\[ N = \bigcup_{e \in E(S)} [e]_N \,.\]
\item If $s \simeq_N t$ then $ss^{-1} \simeq_N tt^{-1}, s^{-1}s \simeq_N t^{-1}t$, and $s^{-1} \simeq_N t^{-1}$.
\item The restriction of $\simeq_N$ to $E(S)$ coincides with Green's $\J$--relation $\J_N$ induced on $E(S)=E(N)$,
\item In the case $N=S$ the relation $\simeq_S$ coincides with Green's $\J$--relation on $S$,
\item In the case $N=E(S)$ the relation $\simeq_{E(S)}$ is the trivial relation on $S$.
\end{enumerate}
\end{prop}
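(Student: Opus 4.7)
The plan is to prove each of the seven parts by directly unpacking the definition of $\simeq_N$, relying on Lemma \ref{leq_props} throughout. I will also use the elementary observation that a full inverse subsemigroup $N$ is closed downward under the natural partial order: if $t \leq n \in N$, then $t = ft$ for some $f \in E(S) = E(N)$, hence $t \in N$.

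For part (a), I would verify each of the three pairwise equivalences by exhibiting explicit trace-product witnesses from $N$; for example, $n \cdot n^{-1} \cdot n = n$ is a well-defined trace product that shows $n^{-1} \leq_N n$, and the remaining inequalities (including $n \leq_N nn^{-1}$, which already appears in the proof of Lemma \ref{leq_props}(d)) are analogous. Transitivity from Lemma \ref{leq_props}(g) then gives the full chain. Part (b) follows from downward closure: the condition $a \cdot s \cdot b \leq n \in N$ forces $asb \in N$, whence $s = a^{-1}(asb) b^{-1} \in N$ using the trace-product identities $a^{-1}a = ss^{-1}$ and $s^{-1}s = bb^{-1}$. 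Part (c) is then immediate from (a) combined with (b).

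For part (d), the key calculation is $(asb)(asb)^{-1} = aa^{-1}$, which collapses via the trace-product identities. Then $a \cdot ss^{-1} \cdot a^{-1}$ is another trace product whose value is $aa^{-1} \leq tt^{-1}$, giving $ss^{-1} \leq_N tt^{-1}$; symmetric arguments handle the reverse inequality, the $s^{-1}s$ statement, and (by inverting the defining inequalities) the $s^{-1} \simeq_N t^{-1}$ statement. Part (f) is just the symmetrisation of Lemma \ref{leq_props}(b), and (g) combines Lemma \ref{leq_props}(a) with the antisymmetry of the natural partial order.

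Part (e) is the one step requiring real ingenuity. Interpreting $\J_N$ as Green's $\J$-relation on the inverse semigroup $N$ restricted to $E(N) = E(S)$, the forward direction follows from the idempotent computation in (d): the inequality $a \cdot e \cdot b \leq f$ yields $aa^{-1} \leq f$, and then $e = a^{-1}a = a^{-1}(aa^{-1})a \leq a^{-1}f a$ puts $e$ in $N f N$. For the reverse direction, given $e = afb$ with $a,b \in N$ (the identity $1$, if it appears, can be absorbed using $e \in E(S) \subseteq N$), I would observe that $aa^{-1}e = e = eb^{-1}b$ because $e^2 = e = afb$. This lets me take $x = a^{-1}e$ and $y = eb^{-1}$ and verify $x^{-1}x = e = yy^{-1}$, so that $x \cdot e \cdot y$ is a well-defined trace product equal to $(a^{-1}a) f (bb^{-1}) \leq f$. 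Guessing the correct pair $(x,y)$ is the main obstacle; once found, the remaining verifications are direct computations in the semilattice of idempotents.
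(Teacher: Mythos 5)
Your proof is correct, and for parts (a)--(d), (f) and (g) it follows essentially the same route as the paper: explicit trace-product witnesses for (a), downward closure of the full subsemigroup $N$ for (b) (the paper instead notes that $a\cdot s\cdot b$ is an idempotent below $e$ and solves for $s$ directly, but this is the same computation; note the small slip in your preliminary remark, where $t\leq n$ gives $t=fn\in N$ rather than $t=ft$), and the collapse of $(asb)(asb)^{-1}$ to a single idempotent for (d) (the paper conjugates the range idempotents by $b$ and $d$ where you conjugate the domain idempotents by $a$ and $c$ --- a mirror image of the same argument). The one place you genuinely diverge is the converse half of (e). The paper invokes the standard description of the $\J$--preorder in an inverse semigroup: $e\preceq_{\J_N} f$ yields a single element $m\in N$ with $m^{-1}m=e$ and $mm^{-1}\leq f$, whence $m\cdot e\cdot m^{-1}\leq f$ immediately. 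You instead work from the raw factorisation $e=afb$ and manufacture the witnesses $x=a^{-1}e$, $y=eb^{-1}$, verifying $x^{-1}x=e=yy^{-1}$ from $aa^{-1}e=e=eb^{-1}b$ so that $x\cdot e\cdot y=(a^{-1}a)f(bb^{-1})\leq f$. Both are valid; the paper's version is shorter but leans on the structure theory of Green's relations in inverse semigroups ($\preceq_\J$ via $\curlyD$--classes below $f$), while yours is self-contained and needs only the definition of $\J_N$ as $N^1eN^1=N^1fN^1$, at the cost of the slightly less obvious choice of $x$ and $y$.
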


\begin{proof} \leavevmode
\begin{enumerate}
\item If $n \in N$ then $n^{-1} \cdot n \cdot n^{-1} = n^{-1}$ and $n \cdot n^{-1} \cdot n = n$, and hence $n \simeq_N n^{-1}$.
Similarly $nn^{-1} \cdot n \cdot n^{-1} = nn^{-1}$ and $nn^{-1} \cdot nn^{-1} \cdot n = n$, whence $n \simeq_N nn^{-1}$.

\item Suppose that $s \in S$ and that for some $n \in N$ we have $s \simeq_N n$.  By part (a) we may assume that $n \in E(S)$:
then there exist $p,q \in N$ such that $p \cdot s \cdot q \leq n$.  Hence for some $e \in E(S)$ we have $p \cdot s \cdot q=e$
and so $s = p^{-1} \cdot e \cdot q^{-1} = p^{-1}q^{-1} \in N$.

\item This follows from parts (a) and (b).

\item Suppose that $s \simeq_N t$, with $a,b,c,d \in N$ as in \eqref{simeq_def}.  Then $bb^{-1}=s^{-1}s$, $b^{-1}b \leq t^{-1}t,
dd^{-1}=t^{-1}t, d^{-1}d \leq s^{-1}s$ and so $s^{-1}s \simeq_N t^{-1}t$.   Similarly $ss^{-1} \simeq tt^{-1}$.  Since $b^{-1} \cdot s^{-1} \cdot a^{-1} \leq t^{-1}$ and 
$d^{-1} \cdot t^{-1} \cdot c^{-1} \leq s^{-1}$, we also have $s^{-1} \simeq t^{-1}$.

\item If $e \simeq_N f$ then there exist $a,b,p,q \in N$ with $a \cdot e \cdot b \leq f$ and $p \cdot f \cdot q \leq e$.
Therefore we have $a^{-1}a=e$ and $aa^{-1} \leq f$, and so $e \leq_{\J_N} f$.  By symmetry $f \leq_{\J_N} e$
and so $e \; \J_N \; f$.  Conversely, if $e \; \J_N \; f$ there exist $m,n \in N$ with $mm^{-1} \leq f$ and $m^{-1}m=e, nn^{-1} \leq e$ 
and $n^{-1}n=f$.  Then $m \cdot e \cdot m^{-1} \leq f$ and $n \cdot f \cdot n^{-1} \leq e$, and so $e \simeq_N f$.

\item This follows from part (b) of Lemma \ref{leq_props}.

\item By Lemma \ref{leq_props}(a), $\leq_{E(S)}$ is the natural partial order , which is of course anti-symmetric.

\end{enumerate}
\end{proof}

However, $\simeq_N$ need not be a congruence on $S$. 

\begin{example} \leavevmode \label{I4egs}
\begin{enumerate}
\item In the symmetric inverse monoid $\I_4$, let $f: \{1 \} \ra \{2 \}$,  let $S$ be the inverse subsemigroup 
\[ S = \{ \id_{\{1,3\}}, \id_{\{1\}}, \id_{\{2\}}, f, f^{-1}, 0 \} \]
of $\I_4$, and let $N=S$.  Then $\id_{\{1\}} = ff^{-1} \simeq_S f^{-1}f = \id_{\{2\}}$.
But $\id_{\{1,3\}} \id_{\{1\}} = \id_{\{1\}}$ is not $\simeq_S$--related to  $\id_{\{1,3\}} \id_{\{2\}} = 0$. In this example,
the poset of $\J$--classes is just a three-element chain and so is a semilattice.
\item Now let $g:  \{3 \} \ra \{4 \}$ in $\I_4$ and let $T$ be the inverse subsemigroup of $\I_4$
generated by $\{ \id_{\{1,3\}}, \id_{\{2,4\}}, f,g \}$.  Here the $\J$--classes do not form a semilattice, and so
$\simeq_T$ is not  a congruence, and the quotient $T / \simeq_T$ is not an inverse semigroup: it is the poset
\begin{center}
\includegraphics[height=3cm]{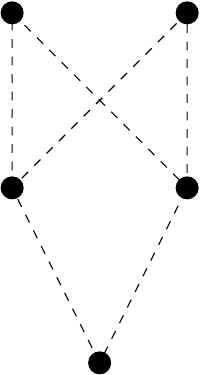}
\end{center}
\end{enumerate}
\end{example}

Following the notation in \cite{AGM}, we shall denote the quotient $S / \simeq_N$ by
$S \twobar N$ and let $\pi : S \ra S \twobar N$ be the quotient map.  Our next  result sets out the ordered groupoid structure on $S \twobar N$: it is a special case of 
\cite[Theorem 3.14]{AGM}, but the description is simpler for quotients of inverse semigroups and seems worth stating
in detail.

\begin{theorem}
\label{is_quot_is_og}
For any inverse semigroup $S$ and normal inverse subsemigroup $N$, the quotient set $S \twobar N$ is an ordered 
groupoid, with the following structure:
\begin{enumerate}
\item the identities are the classes $[e]_N$ where $e \in E(S)$, and a class $[s]_N$ has domain $[ss^{-1}]_N$,
range $[s^{-1}s]_N$, and inverse $[s^{-1}]_N$.
\item as a poset, $E(S \twobar N)$ is isomorphic to $N/\J_N$.
\item  If $s,t \in S$ and $s^{-1}s \simeq_N tt^{-1}$, then there
exists $a \in N$ with $aa^{-1} \leq s^{-1}s$ and $a^{-1}a=tt^{-1}$: the composition of $[s]_N$ and $[t]_N$ is then
defined as $[sat]_N$.  
\item The ordering $\leq_N$ of $\simeq_N$--classes is given by
\[ [s]_N \leq_N [t]_N \; \iff \; \text{there exist} \; a,b \in N \; \text{such that} \; a \cdot s \cdot b \leq t \,. \]
\end{enumerate}
\end{theorem}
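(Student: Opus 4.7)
My plan is to prove (a)--(d) by a direct piece-by-piece verification, using Lemma \ref{leq_props} and Proposition \ref{simeq_props}; the result can alternatively be deduced as a specialisation of \cite[Theorem 3.14]{AGM}, but the explicit formulas above reward a hands-on treatment. I begin by noting that Proposition \ref{simeq_props}(d) ensures the operations $[s]_N \mapsto [ss^{-1}]_N$, $[s]_N \mapsto [s^{-1}s]_N$, and $[s]_N \mapsto [s^{-1}]_N$ descend to $\simeq_N$-classes. For (a), a class $[s]_N$ is an identity of the quotient groupoid precisely when $[s]_N = [ss^{-1}]_N$, i.e.\ $s \simeq_N ss^{-1}$. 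By Proposition \ref{simeq_props}(b) this forces $s \in N$, and Proposition \ref{simeq_props}(a) gives the converse; hence the identities are exactly the classes $[e]_N$ with $e \in E(S)$. Proposition \ref{simeq_props}(e) then identifies the set of identities with $E(S)/\J_N$, which coincides with $N/\J_N$ since every $\J_N$-class of the inverse semigroup $N$ meets $E(N) = E(S)$. The order correspondence in (b) is a direct unpacking of \eqref{leq_def}.

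For (c), the existence of the linking element $a$ comes from Proposition \ref{simeq_props}(e): $s^{-1}s \simeq_N tt^{-1}$ gives $s^{-1}s \mathrel{\J_N} tt^{-1}$, and its proof yields $a \in N$ with $a^{-1}a = tt^{-1}$ and $aa^{-1} \leq s^{-1}s$. Then $at$ is a trace product in $S$ and $sat$ is a genuine element of $S$, so $[sat]_N$ is defined. The main obstacle is well-definedness, which has three facets: independence from the choice of $a$, and from the representatives of $[s]_N$ and $[t]_N$. For the first, if $a' \in N$ also satisfies the required conditions, then $a^{-1}a' \in N$ and the identity $sa't = sa \cdot (a^{-1}a') \cdot t$, combined with the normality of $N$ and \eqref{simeq_def}, yields $sat \simeq_N sa't$. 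For the other two, starting from witnesses $p \cdot s \cdot q \leq s'$ (with $p, q \in N$) one transports them through the composition, producing new elements of $N$ that establish $sat \simeq_N s'a't'$; the manipulation parallels the proof of transitivity in Lemma \ref{leq_props}(g). Once composition is well-defined, associativity, the identity laws $[ss^{-1}]_N \cdot [s]_N = [s]_N = [s]_N \cdot [s^{-1}s]_N$, and the inverse law $[s^{-1}]_N \cdot [s]_N = [s^{-1}s]_N$ follow by direct computation in $S$ combined with Proposition \ref{simeq_props}.

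For (d), the relation $\leq_N$ on classes is the descent of the preorder of Lemma \ref{leq_props}(g), and antisymmetry of the quotient relation is built into the definition of $\simeq_N$. The axiom OG1 follows from \eqref{leq_def} by inverting the witnessing elements of $N$, and OG2 from the compatibility of the natural partial order with products in $S$ together with the composition formula in (c). For OG3, given $[s]_N$ and an identity $[e]_N \leq_N [ss^{-1}]_N$ with $e \in E(S)$, the witnessing inequality $u \cdot e \cdot v \leq ss^{-1}$ produces an idempotent $f \leq ss^{-1}$ with $[f]_N = [e]_N$, and $[fs]_N$ then serves as the required restriction; uniqueness follows from antisymmetry of $\leq_N$ on classes.
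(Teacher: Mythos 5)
Your overall route is the same as the paper's: a direct, piece-by-piece verification of the groupoid structure, well-definedness of the composition $[s]_N\cdot[t]_N=[sat]_N$, and the axioms OG1--OG3, using Lemma \ref{leq_props} and Proposition \ref{simeq_props}. Parts (a), (b) and the identification of the identities are handled correctly. However, two steps as written do not go through. First, the identity you use for independence of the linking element, $sa't = sa\cdot(a^{-1}a')\cdot t$, is false in general: the right-hand side is $s(aa^{-1})a't$, and since $aa^{-1}$ and $a'a'^{-1}$ are merely two idempotents below $s^{-1}s$ with no containment between them, $s(aa^{-1})a't$ need not equal $sa't$. The correct manipulation inserts $tt^{-1}=a^{-1}a$ on the \emph{other} side, e.g.\ $sa't = sa'(a^{-1}a)t = (sa'a^{-1}s^{-1})\cdot(sat)$, which exhibits $sa't$ as a trace product of an element of $N$ (using normality) with $sat$ and hence gives $sa't\simeq_N sat$; this is essentially the computation in the paper. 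A related point you pass over: $(sat)(sat)^{-1}=s(aa^{-1})s^{-1}$ is only $\leq ss^{-1}$, so showing that $[sat]_N$ really has domain $[ss^{-1}]_N$ requires the witness $p\in N$ coming from the other half of $s^{-1}s\simeq_N tt^{-1}$; it is not an automatic "direct computation".

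The more serious gap is the uniqueness clause of OG3. Antisymmetry of $\leq_N$ on classes only yields uniqueness once you know that any two candidate restrictions are \emph{comparable}; it does not produce that comparability. Concretely, given an arbitrary class $[k]_N\leq_N[s]_N$ with $[kk^{-1}]_N=[e]_N$, you must prove $[k]_N\leq_N[fs]_N$ (for your chosen $f=aa^{-1}\leq ss^{-1}$), and this is a genuine computation — in the paper it is the longest single step of the proof, assembling witnesses $u,v,b,q$ and the chain $k\simeq_N q^{-1}a^{-1}s\simeq_N aqq^{-1}a^{-1}s\leq aa^{-1}s$. Your proposal supplies no argument for this comparability, so as it stands OG3 is not verified. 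Your treatment of OG2 is also thinner than needed (one must first replace representatives so that $s_1\leq s$ and $t_1\leq t$, then compare the two composites through an explicit element of $N$), but that is a matter of detail rather than a wrong step.
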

 
\begin{proof}
It follows from part (d) of Proposition \ref{simeq_props} that the domain and range of $[s]_N$ and its inverse
$[s]_N^{-1}$ are well-defined.  

Suppose that $s,t \in S$ and $s^{-1}s \simeq_N tt^{-1}$ bu that we choose another
element $z \in N$ with $zz^{-1} \leq s^{-1}s$ and $z^{-1}z=tt^{-1}$.  Then $az^{-1}z=att^{-1}=aa^{-1}a=a$
and $s^{-1}sz = (s^{-1}s)(zz^{-1})z=zz^{-1}z=z$.  Hence
\[ sat = saz^{-1}zt = saz^{-1}s^{-1}szt = (saz^{-1}s^{-1}) \cdot szt \simeq_N szt \]
and so for fixed $s,t$ the $\simeq_N$--class of the element $sat$ does not depend on the choice of $a$.   We denote this class by
$s \between t$.  Suppose that 
$s \simeq_N s_1$ and that we choose $a_1 \in N$ to form $s_1 \between t = [s_1a_1t]_N$.  There exist $u,v \in N$ with 
$u \cdot s_1 \cdot v \leq s$, and so $vv^{-1}=s_1^{-1}s_1 \geq a_1a_1^{-1}$.  Then
\begin{align*}
(v^{-1}a_1)(v^{-1}a_1)^{-1} &= v^{-1}a_1a_1^{-1}v  \leq v^{-1}v \leq s^{-1}s \\
\intertext{and}
(v^{-1}a_1)^{-1}(v^{-1}a_1) &= a_1^{-1}vv^{-1}a_1  = a_1^{-1}a_1 = tt^{-1}.
\end{align*}
Therefore we can use the element $v^{-1}a_1$ to form the class $s \between t = [sv^{-1}a_1t]_N$.  Now 
$sv^{-1}s_1^{-1} = (sv^{-1}v)(v^{-1}s_1^{-1} = (us_1v)(v^{-1}s_1^{-1} = u$ and since $s^{-1}s \geq v^{-1}v$, we have
\[ sv^{-1}a_1t = sv^{-1}vv^{-1}a_1t = sv^{-1}s_1^{-1}s_1a_1t = u \cdot (s_1at) \simeq_N s_1a_1t \]
and so $s \between t \simeq_N s_1 \between t$.  Similarly, $s \between t$ does not depend on the choice of the element
$t$ within its $\simeq_N$-- class, and the product $[s]_N \cdot [t]_N = [sat]_N$ is well-defined.  

Now the relation $s^{-1}s \simeq_N tt^{-1}$ furnishes not only $a \in N$ with $aa^{-1} \leq s^{-1}s$ and $a^{-1}a=tt^{-1}$
but also $p \in N$ with $pp^{-1} \leq s^{-1}s$ and $pp^{-1}=tt^{-1}$.  Consider $saps^{-1} \in N$: we have
\begin{align*}
(saps^{-1})(saps^{-1})^{-1} &= saps^{-1}sp^{-1}a^{-1}s^{-1} \leq sapp^{-1}a^{-1}s^{-1}= (sat)(sat)^{-1}\\
\intertext{and}
(saps^{-1})^{-1}(saps^{-1}) &= sp^{-1}a^{-1}s^{-1}saps^{-1}\\
&= sp^{-1}a^{-1}aps^{-1} \\
&= sp^{-1}ps^{-1} = ss^{-1}
\end{align*}
and, since  $(sat)(sat)^{-1} \leq ss^{-1}$, we have $ss^{-1} \simeq_N (sat)(sat)^{-1}$.  Therefore,
$[sat]_N$ has domain $[ss^{-1}]_N$ and range $[t^{-1}t]_N$, and the composition $[s]_N \cdot [t]_N = [sat]_N$
does give a groupoid structure on $S \twobar N$.

Now by Lemma \ref{leq_props}, $\leq_N$ induces the given partial order on the $\simeq_N$--classes, and it remains to show that 
this partial order makes $S \twobar N$ into an ordered groupoid. 

Now if $a \cdot s \cdot b \leq t$ then $b^{-1} \cdot s^{-1} \cdot a^{-1} \leq t^{-1}$ and so $[s]_N \leq_N [t]_N$ implies that 
$[s]_N^{-1} \leq_n [t]_N^{-1}$.  Now suppose that $[s_1]_N \leq [s]_N, [t_1]_N \leq [t]_N$ and that the compositions
$[s]_N \cdot [t]_N$ and $[s_1] \cdot [t_1]$ exist.  There exist $m,n,u,v \in N$ such that $m \cdot s_1 \cdot m \leq s$
and $u \cdot t_1 \cdot v \leq t$: since $[m \cdot s_1 \cdot n]_N = [s_1]_N$ and $[u \cdot t_1 \cdot v]_N = [t_1]_N$
we may as well assume that $s_1 \leq s$ and $t_1 \leq t$.

We now have $a,p \in N$ with $aa^{-1} \leq s^{-1}s, a^{-1}a = tt^{-1}$ and $b,q \in N$ with
$bb^{-1} \leq s_1^{-1}s_1, b^{-1}b=t_1t_1^{-1}, qq^{-1} \leq t_1t_1^{-1}$ and $q^{-1}q = s_1^{-1}s_1$. 
Now
\[ s_1bt_1 \simeq_N sab^{-1}s^{-1} \cdot s_1bt_1 =  sab^{-1}s_1^{-1}s_1bt_1\leq sat_1 \leq sat \]
and so $[s_1]_N \cdot [t_1]_N = [s_1bt_1]_N \leq [sat]_N = [s]_N \cdot [b]_N$.

Finally we suppose that $[n]_N \leq_N [ss^{-1}]_N$ for some $n \in N$ and $s \in S$.  By part (a) of Proposition \ref{simeq_props}
we may  replace $n$ by$e=nn^{-1}$.  Then there exist $a,b \in N$ with $a \cdot e \cdot b \leq ss^{-1}$ and so
$aa^{-1} \leq ss^{-1}$ and $a^{-1}a=e$.  Then the class $[aa^{-1}s]_N$ has domain $[aa^{-1}]_N = [e]_N$
and $[aa^{-1}s]_N \leq_N [s]_N$.  We wish to show that $[aa^{-1}s]_N$ is the unique $\simeq_N$--class
with these properties.

Suppose that $[k]_N \leq_N [s]_N$ and that $[kk^{-1}]_N=[e]_N$.  There exist $u,v \in N$ with $u \cdot k \cdot v \leq s$
and $b,q \in N$ with $bb^{-1} \leq kk^{-1}, b^{-1}b=e, qq^{-1} \leq e$, and $q^{-1}q = kk^{-1}$.  We have $a \in N$
as in the previous paragraph.  Then $kvs^{-1}=u^{-1}$ and $u^{-1}uq^{-1}=q^{-1}$: hence
\begin{align*}
 k &\simeq_N k \cdot vs^{-1}uq^{-1}a^{-1}s =q^{-1}a^{-1}s = q^{-1}a^{-1}aa^{-1}s \\
&= q^{-1}a^{-1} \cdot aqq^{-1}a^{-1}s \simeq_N = aqq^{-1}a^{-1}s \leq aa^{-1}s \end{align*}
and so $[k]_N \leq_N [aa^{-1}s]_N$.  By symmetry, they are equal. 
\end{proof}

\begin{cor}{\cite[Theorem 4.15]{AGM}}
\label{hom_factors}
Given a homomorphism $\phi : S \ra \Sigma$ of inverse semigroups, let $K = \{ s \in S : x \phi \in E(\Sigma) \}$.  Then
$K$ is a normal inverse subsemigroup of $S$, and $\phi$ factorises as a composition
\[ S \stackrel{\pi}{\longrightarrow} S \twobar K \stackrel{\kappa}{\longrightarrow} \Sigma \]
where $\kappa$, defined by  $[s]_K \kappa = s \phi$, is a star-injective functor.
\end{cor}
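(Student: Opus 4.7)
The plan is to verify the statement in four stages: normality of $K$, well-definedness and order-preservation of $\kappa$, functoriality of $\kappa$, and star-injectivity (the main obstacle). For normality, $E(S)\subseteq K$ holds because $\phi$ preserves idempotents (giving fullness); closure of $K$ under products and inverses is inherited from the corresponding closure of $E(\Sigma)$; and $s^{-1}Ks\subseteq K$ because $\phi(s^{-1}ks)$ is a conjugate of an idempotent in $\Sigma$, hence idempotent.

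For well-definedness and functoriality, the workhorse observation is the following. For any $a\in K$ such that $a\cdot s$ is a trace product (so that $a^{-1}a=ss^{-1}$), the idempotent $a\phi\in E(\Sigma)$ satisfies $a\phi=(a\phi)^{-1}(a\phi)=\phi(a^{-1}a)=(s\phi)(s\phi)^{-1}$; symmetrically, for $b\in K$ such that $s\cdot b$ is a trace product, $b\phi=(s\phi)^{-1}(s\phi)$. Hence the $\phi$-image of any such trace-product sequence $a\cdot s\cdot b$ collapses to $s\phi$. Applied to the defining witness $a\cdot s\cdot b\leq t$ of $s\leq_K t$, this forces $s\phi\leq t\phi$ in $\Sigma$, and by antisymmetry $s\simeq_K t$ implies $s\phi=t\phi$, establishing both well-definedness and order-preservation of $\kappa$. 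For functoriality, the composition formula $[s]_K\cdot[t]_K=[sat]_K$ from Theorem \ref{is_quot_is_og}(c), with $a\in K$ satisfying $a^{-1}a=tt^{-1}$, gives $a\phi=(t\phi)(t\phi)^{-1}$ by the same collapse, whence $(sat)\phi=(s\phi)(t\phi)$; and identities $[e]_K$ with $e\in E(S)$ send to $e\phi\in E(\Sigma)$.

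The main obstacle is star-injectivity. Suppose $[s]_K,[t]_K\in\st_{S\twobar K}([e]_K)$ satisfy $s\phi=t\phi$; we must derive $s\simeq_K t$. From $[ss^{-1}]_K=[tt^{-1}]_K$, that is $ss^{-1}\simeq_K tt^{-1}$, the argument behind Theorem \ref{is_quot_is_og}(c) supplies $p\in K$ with $p^{-1}p=ss^{-1}$ and $pp^{-1}\leq tt^{-1}$. The critical construction is the element $b:=s^{-1}p^{-1}t$, for which I plan to verify three claims: (i) $b\in K$, by computing its $\phi$-image as $(s\phi)^{-1}(s\phi)$ using $p\phi=(s\phi)(s\phi)^{-1}$ and $s\phi=t\phi$; (ii) $bb^{-1}=s^{-1}s$, so that $p\cdot s\cdot b$ is a legitimate trace-product sequence, the cancellation hinging on the identity $p^{-1}tt^{-1}p=p^{-1}p$ derived from $pp^{-1}\leq tt^{-1}$; and (iii) the equality $p\cdot s\cdot b=pp^{-1}t$, which lies below $t$ in the natural partial order because $pp^{-1}\in E(S)$. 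These together give $p\cdot s\cdot b\leq t$ with $p,b\in K$, i.e.\ $s\leq_K t$; a symmetric argument with $q\in K$ satisfying $q^{-1}q=tt^{-1}$ and $qq^{-1}\leq ss^{-1}$ yields $t\leq_K s$, whence $s\simeq_K t$ as required.
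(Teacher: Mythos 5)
Your proposal is correct and follows essentially the same route as the paper: the ``collapse'' observation $a\phi=(ss^{-1})\phi$, $b\phi=(s^{-1}s)\phi$ for trace-product witnesses is exactly how the paper establishes well-definedness and functoriality, and your star-injectivity witness $p\cdot s\cdot(s^{-1}p^{-1}t)=pp^{-1}t\leq t$ is the paper's $b\cdot u\cdot(u^{-1}b^{-1}v)=(buu^{-1}b^{-1})v\leq v$ up to renaming. The only detail you gloss over is checking that $(s\phi)(t\phi)$ is a \emph{trace} product in $\Sigma$, i.e.\ $(s^{-1}s)\phi=(tt^{-1})\phi$, but this is immediate from $s^{-1}s\simeq_K tt^{-1}$ and your own well-definedness step.
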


\begin{proof}
The map $\kappa$ is well-defined, since if $[s]_K = [s']_K$ then, for some $a,b \in K$ we have
$a \cdot s \cdot b \leq s'$.  Now $a \phi = (a^{-1}a) \phi = (ss^{-1})\phi$ and similarly, $b \phi = (s^{-1}s)\phi)$.
It follows that $(a \cdot s \cdot b) \phi = s \phi \leq s' \phi$.  By symmetry, $s' \phi \leq s \phi$.

To show that $\kappa$ is a functor, suppose that  $[s]_K$ and $[t]_K$ are composable in $S \twobar K$.  Then there exist 
$a,p \in K$ with 
\[ aa^{-1} \leq s^{-1}s,
a^{-1}a = tt^{-1}, pp^{-1} \leq tt^{-1}, p^{-1}p = s^{-1}s\]
 and the composition of $[s]_K$ and $[t]_K$ is defined, as in Theorem \ref{is_quot_is_og}, by 
$[s]_K \cdot [t]_K = [sat]_K$.  Then
\begin{align*}
([sat]_K) \kappa &= (sat)\phi  = (s \phi)(a \phi)(t \phi) \\
&= (s \phi) (a^{-1}a)\phi(t\phi) \; (\text{since} \; a \in K)\\
&= (s\phi)(tt^{-1})\phi (t\phi) = (s\phi)(t\phi) \\
\end{align*}
Now $(s\phi)(t\phi)$ is a trace product $(s\phi) \cdot (t\phi)$ since 
\begin{align*}
(s \phi)^{-1}(s\phi) &=(s^{-1}s)\phi  = (p^{-1}p) \phi \\
&= (pp^{-1}) \phi \; (\text{since} \; p \in K)\\
& \leq (tt^{-1}) \phi  = (t \phi)(t \phi)^{-1}
\end{align*}
and similarly
\begin{align*}
(t \phi)(t \phi)^{-1} &=(tt^{-1})\phi = (a^{-1}a)\phi \\
&= (aa^{-1})\phi  \; (\text{since} \; a \in K)\\
& \leq (s^{-1}s)\phi  = (s \phi)^{-1}(s \phi).
\end{align*}
Therefore $(s\phi)(t\phi)$ is a trace product defined in the inductive groupoid $(\Sigma,\cdot)$ and $\kappa$ is a functor.

To show that $\kappa$ is star-injective, suppose that for some $u,v \in S$ we have $[uu^{-1}]_K = [vv^{-1}]_K$ and $u \phi = v \phi$.  We claim that
$u \simeq_K v$.  By symmetry, it is sufficient to show that $u \leq_K v$.  Now by part (e)
Proposition \ref{simeq_props} there exist $a,b \in K$ with $aa^{-1} \leq uu^{-1}, a^{-1}a = vv^{-1}, bb^{-1} \leq vv^{-1}$
and $b^{-1}b=uu^{-1}$.  Then
\[ b \cdot u \cdot u^{-1}b^{-1}v = (buu^{-1}b^{-1})v \leq v \]
and 
\begin{align*}
(u^{-1}b^{-1}v) \phi &= (u^{-1}) \phi (b^{-1}b) \phi v \phi \; \text{(since $b \in K$)} \\
&= (u^{-1}b^{-1}b) \phi v \phi \\
&= (u^{-1} \phi) v \phi = (u \phi)^{-1} v \phi \in E(\Sigma) \; \text{since $u \phi = v \phi$} \,,
\end{align*}
and so $u^{-1}b^{-1}v \in K$ and $u \leq_k v$ as required.
\end{proof}

\begin{cor}
\label{unique_factor}
The factorization of $\phi : S \ra \Sigma$ is unique, in the sense that if $\phi$ also factorizes as
$S \ra S \twobar N  \stackrel{\nu}{\ra} \Sigma$
with $\nu$ a star-injective functor, then $N=K$ (and hence $\nu = \kappa$.)
\end{cor}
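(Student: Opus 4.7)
The plan is to prove $N=K$ by showing the two inclusions separately and then observe that once the two normal inverse subsemigroups coincide, the two factorizations of $\phi$ through the common quotient agree on each class.

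First I would show $N \subseteq K$. Suppose $n \in N$. By Proposition \ref{simeq_props}(c), $n \simeq_N e$ for some $e \in E(S)$, so $[n]_N = [e]_N$ is an identity of the ordered groupoid $S \twobar N$. Since $\nu$ is a functor, it carries identities to identities, so $n\phi = [n]_N \nu \in E(\Sigma)$, which puts $n$ into $K$.

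The substantive direction is $K \subseteq N$, and this is where star-injectivity does the work. Given $k \in K$, we have $k\phi \in E(\Sigma)$. Consider the identity $[kk^{-1}]_N$ of $S \twobar N$ and the morphism $[k]_N$, which has domain $[kk^{-1}]_N$ by Theorem \ref{is_quot_is_og}(a). Both $[kk^{-1}]_N$ and $[k]_N$ therefore lie in $\st_{S \twobar N}([kk^{-1}]_N)$. Computing images: $[kk^{-1}]_N \nu = (kk^{-1})\phi = (k\phi)(k\phi)^{-1} = k\phi$, the final equality because $k\phi$ is idempotent; and $[k]_N \nu = k\phi$ by the factorization. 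Star-injectivity of $\nu$ at the identity $[kk^{-1}]_N$ forces $[k]_N = [kk^{-1}]_N$, so $k \simeq_N kk^{-1}$, which places $k$ in $N$ by Proposition \ref{simeq_props}(c).

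I expect the main obstacle to be precisely the correct assembly of star-injectivity in this last step — making sure one has genuinely two distinct morphisms in a common star with a common image before invoking injectivity. There is no difficulty once one recognises that an arbitrary element $k \in S$ is always $\simeq_N$--comparable (in the groupoid sense) with its own domain idempotent.

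Finally, with $N = K$ established, the two quotient maps $S \to S \twobar K$ and $S \to S \twobar N$ coincide, and the formulae $[s]_K \kappa = s\phi = [s]_N \nu$ show that $\nu$ and $\kappa$ agree on every class, so $\nu = \kappa$.
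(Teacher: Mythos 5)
Your proof is correct and follows essentially the same route as the paper: $N \subseteq K$ because elements of $N$ are $\simeq_N$--equivalent to idempotents and hence map to $E(\Sigma)$, and $K \subseteq N$ by using star-injectivity of $\nu$ to force $[k]_N$ to be an identity of $S \twobar N$ and then invoking the saturation of $N$ by $\simeq_N$. Your write-up actually makes explicit the step the paper leaves implicit (that $[k]_N$ and $[kk^{-1}]_N$ lie in a common star and have equal images under $\nu$), while the paper instead recomputes the saturation argument directly; both are sound.
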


\begin{proof}
If $n \in N$ then by part (a) of Proposition \ref{simeq_props}, we have $n \simeq_N nn^{-1}$ and so $n \phi \in E(\Sigma)$.
Hence $N \subseteq K$.

Now if $k \in K$ then $k \phi \in E(\Sigma)$, and since $\nu$ is star-injective, then $[k]_n$ is an identity in $S \twobar N$
and so, for some $e \in E(S)$ we have $k \simeq_N e$.  Then there exists $a,b \in N$ such that $a \cdot k \cdot b \leq e$,
and so $a \cdot k \cdot  b = f \in E(S)$.  But $a^{-1}a = kk^{-1}$ and $bb^{-1}=k^{-1}k$, so that
\[ k = (kk^{-1})k(k^{-1}k) = a^{-1}akbb^{-1} = a^{-1}fb^{-1} \in N \,.\]
Hence $K \subseteq N$ and so $N=K$.
\end{proof}

We note that this factorisation of an inverse semigroup homomorphism requires the use of an intermediate ordered groupoid.
We shall apply it to the study of congruences in section \ref{congruences}.
For the further study of inverse semigroups, it is clearly of interest to know when we
can form a quotient \emph{inverse semigroup} $S \twobar N$.  Since an inverse semigroup is equivalent to an ordered groupoid
in which the poset of identities is a semilattice,  we have the following.

\begin{prop}
\label{quot_is1}
Let $S$ be an inverse semigroup and $N$ a normal inverse subsemigroup of $S$.  Then 
the quotient ordered groupoid $S \twobar N$ is an inverse semigroup if and only if the poset of
$\J_N$--classes of $S$ is a semilattice.  (This is certainly the case if $\J_N$ is a congruence on $E(N)$.)
\end{prop}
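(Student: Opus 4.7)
The plan is to reduce the statement to the Ehresmann--Schein--Nambooripad theorem cited earlier: an ordered groupoid corresponds to an inverse semigroup precisely when it is inductive, that is, when its poset of identities is a meet semilattice. Since Theorem \ref{is_quot_is_og} has already equipped $S \twobar N$ with an ordered groupoid structure, the only work left is to identify exactly when that ordered groupoid is inductive.

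First I would apply part (b) of Theorem \ref{is_quot_is_og}, which identifies the poset $E(S \twobar N)$ with the poset of $\J_N$-classes on $E(N) = E(S)$. Combined with ESN, this immediately gives the biconditional: $S \twobar N$ is an inverse semigroup if and only if $E(S \twobar N)$ is a meet semilattice, if and only if the poset of $\J_N$-classes of $S$ is a meet semilattice. No further structural calculation is required, since the groupoid composition, the order, and the identification of $E(S\twobar N)$ have all been pinned down in Theorem \ref{is_quot_is_og}.

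For the parenthetical remark I would argue as follows. The set $E(N) = E(S)$ is already a meet semilattice, and if $\J_N$ is a congruence on it, then the quotient $E(N)/\J_N$ inherits a well-defined binary meet from $E(N)$. A routine check shows that this inherited meet gives greatest lower bounds with respect to the partial order induced on $E(N)/\J_N$, so the quotient poset is a meet semilattice and the main equivalence applies.

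There is no genuine obstacle here; the proof is essentially a bookkeeping application of ESN together with parts (b) of Theorem \ref{is_quot_is_og} and (e) of Proposition \ref{simeq_props}. The closest thing to a subtlety is making sure that the semilattice operation constructed from a congruence $\J_N$ on $E(N)$ really is the meet for the \emph{quotient} order on $E(N)/\J_N$, but this is standard for semilattice congruences and requires no new idea.
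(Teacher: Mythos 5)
Your proposal is correct and follows essentially the same route as the paper, which presents this proposition as an immediate consequence of the Ehresmann--Schein--Nambooripad correspondence (an ordered groupoid is an inverse semigroup precisely when its poset of identities is a meet semilattice) combined with part (b) of Theorem \ref{is_quot_is_og} identifying $E(S \twobar N)$ with the poset of $\J_N$--classes. The paper gives no further argument, so your elaboration of the parenthetical remark about semilattice congruences is a harmless (and accurate) addition.
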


\begin{example}
In the symmetric inverse monoid $\I_n$, let $N$ be the subset of non-permutations together with the identity map $\id$.
Then $N$ is a normal inverse subsemigroup.  Since only $\id \in N$ can form trace products with permutations, 
$\leq_N$ restricts to the identity on the symmetric group $S_n \subset \I_n$.  Moreover, for any $\id \ne \nu \in N$ and $\sigma \in S_n$
we have $\nu^{-1} \cdot \nu \cdot \sigma|_{\ran(\nu)} \leq \sigma$ so that $\nu \leq_N \sigma$.  On the elements of $N$,
the relation $\simeq_N$ is equal to the $\curlyD$ (and $\J$) relation, and so for non-identity $\alpha, \beta \in N$ we have
$\alpha \simeq_N \beta \iff |\dom(\alpha)|=|\dom(\beta)|$.  It follows that $\I_n \twobar S_n$ consists of the group
$S_n$ as a set of $n!$ maximal elements, and a chain $e_{n-1} > e_{n-2} > \dotsb > e_1 > e_0$ of identitites corresponding to the 
cardinalities of non-identity elements of $N$.
\end{example}

\begin{example}{\bf Polycyclic and gauge monoids.}
Let $A = \{ a_1,a_2, \dotsc , a_n \}$ with $n \geq 1$.  The polycyclic monoid $P_n$ (introduced in \cite{NP}) is the inverse hull of $A^*$:
its underlying set is $(A^* \times A^*) \cup \{ 0 \}$ and the multiplication of non-zero elements is given by:
\begin{align*}
(s,t)(u,v) = 
\begin{cases}
(s,pv) &\; \text{if} \; t=pu \; \text{for some} \; p \in A^* \,, \\
(ps,v) & \; \text{if} \; u=pt \; \text{for some} \; p \in A^* \,, \\
0 & \; \text{otherwise.}
\end{cases}
\end{align*}
\end{example}
The semilattice of idempotents is
\[ E(P_n) = \{ (p,p) : p \in A^* \} \cup \{ 0 \} \]
and the natural partial order between non-zero elements is given by $(u,v) \leq (s,t)$ if and only if
$u=ps, v=pt$ for some $p \in A^*$.

Full inverse subsemigroups of $P_n$ have the form $Q \cup \{ 0 \}$ where $Q$ is a left congruence on $A^*$: see
\cite[Theorem 3.3]{Lw1}, with a change to left congruence required by our differing conventions.  
Meakin and Sapir \cite{MeaSap} established the first correspondence of this kind, showing that the lattice of congruences on $A^*$
is isomorphic to the lattice of \emph{positively self-conjugate} submonoids of $P_n$, where an inverse submonoid $R$ is
positively self-conjugate if $(w,1)R(1,w) \subseteq R$ for every $w \in A^*$.

\begin{lemma}
\label{normal_in_Pn}
A full inverse semigroup $N = Q \cup \{ 0 \}$ of $P_n$ is normal if and only if $Q$ is a right cancellative congruence on $A^*$.
\end{lemma}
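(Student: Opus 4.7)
The plan is to prove both implications by direct calculation of $s^{-1}ns$ in $P_n$, using the explicit multiplication formula. Throughout, I may assume $s = (x,y) \ne 0$ and $n = (u,v)$ with $(u,v) \in Q$, since otherwise the conjugate is $0 \in N$ automatically.

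For the forward direction, assuming $N$ is normal, I extract the two required properties of $Q$ separately. For right compatibility, I specialise to $s = (1,a)$ with $a \in A$: applying the product rule twice yields $s^{-1}(u,v)s = (ua,va)$, which is a non-zero element of $P_n$, and normality forces $(ua,va) \in Q$, giving $ua\,Q\,va$. A straightforward induction on word length upgrades this to right compatibility over all of $A^*$, and together with the left compatibility known from \cite[Theorem 3.3]{Lw1} this makes $Q$ a congruence. For right cancellativity, I start from $uw\,Q\,vw$ and take $s = (w,1)$ and $n = (uw,vw)$: the product rule collapses $s^{-1}(uw,vw)s$ to exactly $(u,v)$, which is non-zero, whence normality delivers $u\,Q\,v$.

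For the reverse direction, assuming $Q$ is a right cancellative congruence, I take arbitrary $s = (x,y)$ and $n = (u,v) \in Q$ and compute $s^{-1}ns$ by case analysis. The first product $s^{-1}n = (y,x)(u,v)$ is non-zero only if $x = p_1 u$ (yielding $(y, p_1 v)$) or $u = p_2 x$ (yielding $(p_2 y, v)$). Right-multiplication by $(x,y)$ then splits each case into two sub-cases, producing four non-zero outcomes, namely $(y, q_1 y)$, $(q_2 y, y)$, $(p_2 y, r_1 y)$ and $(r_2 p_2 y, y)$, where the auxiliary words satisfy the word equations forced by the sub-cases (such as $p_1 v = q_1 p_1 u$, $p_1 u = q_2 p_1 v$, $v = r_1 x$ or $x = r_2 v$). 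In each sub-case I derive the required $Q$-relation between the two components by combining $u\,Q\,v$ with left compatibility to introduce a common prefix, right cancellation to remove a common suffix (one of $p_1 u$, $p_1 v$, $x$ or $v$), and right compatibility to append the trailing $y$.

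The main obstacle is sub-case (2b), where $x = r_2 v$ and $u = p_2 x = p_2 r_2 v$, and the conjugate equals $(r_2 p_2 y, y)$. Here $u\,Q\,v$ right-cancels to $p_2 r_2 \,Q\, 1$, but the target relation is the reversed product $r_2 p_2 \,Q\, 1$. I would bridge this by left-multiplying by $r_2$ (left compatibility) to obtain $r_2 p_2 r_2 \,Q\, r_2$, then right-cancelling the trailing $r_2$ to get $r_2 p_2 \,Q\, 1$. This commutation manoeuvre, which crucially depends on right cancellativity, is the technical heart of the reverse direction and is what justifies placing ``right cancellative'' alongside ``congruence'' in the statement.
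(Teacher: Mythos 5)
Your proof is correct, and it rests on the same basic tool as the paper's --- direct computation of conjugates $s^{-1}ns$ from the multiplication rule in $P_n$ --- but it is organised quite differently. The paper proves both implications in one stroke: it rewrites the conjugate $s^{-1}(q_1,q_2)s$ as a product $(uw_2,uw_1)(q_1,q_2)(vw_1,vw_2)$ whose outer factors are restrictions of $s=(w_1,w_2)$. (This reduction is legitimate because a full inverse subsemigroup $Q\cup\{0\}$ of $P_n$ is an order ideal, $Q$ being a left congruence, so one may replace $n$ by $ss^{-1}\,n\,ss^{-1}\in N$, after which all the products are trace products and force $q_1=uw_1$, $q_2=vw_1$.) The normality condition then collapses to the single implication $(uw_1,vw_1)\in Q \Rightarrow (uw_2,vw_2)\in Q$, which is visibly equivalent to ``right cancellative two-sided congruence'' upon specialising $w_2=1$ and $w_1=1$. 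You instead separate the two directions: for necessity you make essentially those same specialisations concretely, by conjugating by $(1,a)$ and by $(w,1)$ (incidentally, conjugating by $(1,w)$ directly gives $(uw,vw)$ in one step, so your induction on $|w|$ is not needed); for sufficiency you enumerate every configuration in which $(y,x)(u,v)(x,y)$ is nonzero, which costs four sub-cases and forces the commutation manoeuvre $p_2r_2 \, Q \, 1 \Rightarrow r_2p_2 \, Q \, 1$ in sub-case (2b) --- a step the paper's trace-product normal form sidesteps entirely, since there the prefixes always come out in the same order. Your sub-case (2b) argument (left-multiply by $r_2$, then right-cancel $r_2$) is valid, and your version has the virtue of being fully explicit about where right compatibility and right cancellativity are each used; the paper's version buys brevity at the price of an unstated order-ideal reduction. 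Both arguments are sound.
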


\begin{proof}
By \cite[Theorem 3.3]{Lw1} $Q$ must be a left congruence on $A^*$.
Suppose that $(q_1,q_2) \in Q$ and that $(h_1,h_2), (k_1,k_2) \leq (w_1,w_2) \in P_n$.  Then $h_i = uw_i$ and
$k_i = vw_i$ for $i=1,2$ and some $u,v \in A^*$.  Then $N$ is normal if and only 
\begin{align*}
(h_1,h_2)^{-1}(q_1,q_2)(k_1,k_2) &= (uw_2,uw_1)(q_1,q_2)(vw_1,vw_2) \\
& = (uw_2,vw_2) \in Q
\end{align*}
where $q_1 = uw_1$ and $q_2=vw_1$.  Therefore $N$ is normal if and only if, for all $w_1,w_2 \in A^*$, we have
that $(uw_1,vw_1) \in Q$ implies that $(uw_2,vw_2) \in Q$ and this is equivalent to $Q$ being a right cancellative
two-sided congruence on $A^*$.
\end{proof}

The \emph{gauge inverse monoid} $G_n$ is defined by 
\[ G_n = \{ (s,t) : |s|=|t| \} \cup \{ 0 \} \,.\]
It was introduced in \cite{JoLw}, and by Lemma \ref{normal_in_Pn} it is a normal inverse submonoid of
$P_n$.  By  \cite[Lemma 3.4]{JoLw} Green's relations $\curlyD$ and $\J$ coincide in $G_n$, and clearly
$(s,t)$ and $(u,v)$ are $\curlyD$--related in $G_n$ if and only if $|s|=|t|=|u|=|v|$.  Thus the non-zero $\J$--classes
are indexed by the non-negative integers and the $\J$--order on them is trivial.  The $\J$--class of $0$ is minimal and $E(P_n)/\J_{G_n}$
is a semilattice, and so $P_n \twobar G_n$ is an inverse semigroup.  To identify it, we note that
$(u,v) \leq_{G_n} (s,t)$ if and only if there exist $h,k  \in A^*$ such that $|h|=|u|, |k|=|v|$ and
\[ (h,k) = (h,u)(u,v)(v,k) \leq (s,t) \,. \]
It follows that $h=ps$ and $k=pt$ for some $p \in A^*$ and so
\[ (u,v) \leq_{G_n} (s,t) \iff \; \text{there exists} \; p \in A^* \; \text{such that} \; |u|-|s|=|v|-|t|=|p| \geq 0 \,.\]
The relation $\simeq_{G_n}$ is then given by
\[ (u,v) \simeq_{G_n} (s,t) \iff |u|=|s| \; \text{and} \; |v|=|t| \]
and the $\simeq_{G_n}$ classes of the non-zero elements in $P_n$ are thus parametrized by pairs of non-negative integers.
Now in $P_n \twobar G_n$ we have
\[ [(u,v)]_{G_n} [(s,t)]_{G_n} = [(u,v)(v,s)(s,t)]_{G_n} = [(u,t)]_{G_n} \]
and so  $P_n \twobar G_n$ is isomorphic to the Brandt semigroup on the set of non-negative integers.

\section{Congruences and kernels}
\label{congruences}
Let $\rho$ be a relation on an inverse semigroup $S$.  Following \cite{Pe}, the \emph{trace} $\tr(\rho)$ of $\rho$ is its restriction to $E(S)$, and the \emph{kernel} $\ker \rho$ is the set
\[ \ker \rho = \{ s \in S : s \relrho e \; \text{for some} \; e \in E(S) \}. \]

\begin{prop}
\label{ker_tr_simeq}
The kernel of the relation $\simeq_N$ is $N$ and its trace is Green's relation $\J_N$ on $E(N)=E(S)$.
\end{prop}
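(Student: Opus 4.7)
The plan is to observe that both parts follow immediately from facts already recorded in Proposition \ref{simeq_props}, so essentially no new work is needed; the proof is really just a matter of unwinding the definitions of $\ker$ and $\tr$ and citing the appropriate items.

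For the kernel statement, I would start from the definition
\[ \ker(\simeq_N) = \{ s \in S : s \simeq_N e \; \text{for some} \; e \in E(S) \} = \bigcup_{e \in E(S)} [e]_N \,. \]
Part (c) of Proposition \ref{simeq_props} identifies this union as $N$ exactly, so $\ker(\simeq_N) = N$ with no further argument. (If one wanted to be self-contained, one direction uses Proposition \ref{simeq_props}(a) — every $n \in N$ satisfies $n \simeq_N nn^{-1}$ — and the other direction is part (b), which says $\simeq_N$ saturates $N$, so if $s \simeq_N e$ for some $e \in E(S) \subseteq N$ then $s \in N$.)

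For the trace statement, $\tr(\simeq_N)$ is by definition the restriction of $\simeq_N$ to $E(S)$, and Proposition \ref{simeq_props}(e) says exactly that this restriction coincides with $\J_N$ on $E(S) = E(N)$. So this half is a direct citation.

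Since both halves are immediate consequences of results already proved, there is no real obstacle — the only thing to be careful about is making sure the two clauses of Proposition \ref{simeq_props} being invoked (parts (c) and (e)) are the correct ones, and that the reader sees why the union $\bigcup_{e \in E(S)} [e]_N$ really is the kernel as defined by Petrich's convention cited at the start of Section \ref{congruences}. The whole proof should amount to two short sentences.
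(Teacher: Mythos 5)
Your proof is correct and takes essentially the same approach as the paper, which also disposes of both claims by citing the earlier lemmas; indeed your citations (Proposition \ref{simeq_props}(c) for the kernel and (e) for the trace) are the apt ones, arguably more precisely targeted than the parts the paper itself lists. No gaps.
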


\begin{proof}
This follows from Lemma \ref{leq_props}(c), Proposition \ref{simeq_props}(a) and (d).
\end{proof}

Recall from \cite{Pe} that a congruence $\rho$ on the semilattice of idempotents $E(S)$ of $S$ is \emph{normal} if,
for all $s \in S$, $e \relrho f$ implies that $s^{-1}es \relrho s^{-1}fs$.  Then \cite[Definition 4.2]{Pe} a \emph{congruence pair} 
$(K,\nu)$ on $S$ consists of a normal inverse semigroup
$K$ of $S$ and a normal congruence $\nu$ on $E(S)$ such that 
\begin{itemize}
\item[~] \eqnum\label{conpair1} \; if $e \in E(S)$ and $s \in S$ satisfy $se \in K$ and $s^{-1}s \, \nu \, e$ then $s \in K$,
\item[~]  \eqnum\label{conpair2} \; if $u \in K$ then $uu^{-1} \, \nu \, u^{-1}u$.
\end{itemize}
For any \emph{congruence} $\rho$, its kernel and trace form a congruence pair.  Conversely, given a congruence pair $(K,\nu)$ the
relation $\rho_{(K,\nu)}$ defined by
\[  \eqnum\label{conpair3} s \, \rho_{(K,\nu)} \, t \; \iff \; st^{-1} \in K \; \text{and} \; s^{-1}s \, \nu \, t^{-1}t \]
is a congruence with kernel $K$ and trace $\nu$.  This correspondence is the basis of the characterization of congruences in
\cite[Theorem 4.4]{Pe}.  The lattice of all congruences on both regular and inverse semigroups was earlier studied by
Reilly and Scheiblich \cite{RS}.

If $\rho$ is a congruence on $S$, let $\rho(s)$ be the class of $s \in S$ and  let $\rho_* : S \ra S/ \rho$ be the quotient map, $s \mapsto \rho(s)$.  Now $S/ \rho$ is an inverse semigroup and so
is an inductive groupoid with its trace product.  If $K = \ker \rho$ then $\simeq_K$ is a relation on $S$, and as in \cite{AGM} we have the
quotient map $\pi : S \ra S \twobar K$, $s \mapsto [s]_K$ where $S \twobar K$ is an ordered groupoid.  Applying Corollary \ref{hom_factors} to the  homomorphism $S \ra S/ \rho$ we obtain:

\begin{prop} 
\label{functor}
If $K$ is the kernel of the congruence $\rho$ on an inverse semigroup $S$ then $s \simeq_K t$ implies that $s \relrho t$, and the
induced mapping $\kappa : S \twobar K \ra S / \rho$ carrying $[s]_K \mapsto \rho(s)$ is a surjective star-injective functor.  
\end{prop}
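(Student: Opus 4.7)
The plan is to recognize this proposition as essentially a direct application of Corollary \ref{hom_factors} to the quotient homomorphism $\rho_* : S \to S/\rho$, with one identification to verify and one routine remark about surjectivity.

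First I would check that the kernel in the sense of Corollary \ref{hom_factors} applied to $\rho_*$ agrees with $K = \ker\rho$ in the sense of Petrich. The definition from Corollary \ref{hom_factors} gives
\[
\{ s \in S : s\rho_* \in E(S/\rho) \},
\]
and since the idempotents of the inverse semigroup $S/\rho$ are exactly the $\rho$-classes of the form $\rho(e)$ with $e \in E(S)$, an element $s$ lies in this set if and only if $s \, \rho \, e$ for some $e \in E(S)$, which is precisely $\ker\rho = K$. So the two notions of kernel coincide here.

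Next, invoking Corollary \ref{hom_factors} for $\phi = \rho_*$, we obtain the factorisation
\[
S \stackrel{\pi}{\longrightarrow} S \twobar K \stackrel{\kappa}{\longrightarrow} S/\rho
\]
with $\kappa : [s]_K \mapsto \rho(s)$ a star-injective functor. The well-definedness of $\kappa$ is exactly the assertion that $s \simeq_K t$ implies $s\rho_* = t\rho_*$, i.e.\ $s \relrho t$, which is the first claim of the proposition. The functoriality and star-injectivity are delivered wholesale by Corollary \ref{hom_factors}.

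It remains to note that $\kappa$ is surjective: every element of $S/\rho$ has the form $\rho(s) = s\rho_* = [s]_K \kappa$ for some $s \in S$, so $\kappa \circ \pi = \rho_*$ being surjective forces $\kappa$ to be surjective. There is no real obstacle here; the only thing that needs care is the identification of the two notions of kernel, and once that is in place the proposition is just Corollary \ref{hom_factors} restated for the canonical quotient $S \to S/\rho$.
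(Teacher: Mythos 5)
Your proposal is correct and is exactly the paper's approach: the paper derives Proposition \ref{functor} by applying Corollary \ref{hom_factors} to the quotient homomorphism $S \ra S/\rho$, with no further argument given. Your additional checks --- that $\ker\rho$ in Petrich's sense coincides with $\{ s \in S : s\rho_* \in E(S/\rho)\}$ (via the fact that every idempotent of $S/\rho$ is $\rho(e)$ for some $e \in E(S)$), and that surjectivity of $\rho_* = \pi\kappa$ forces surjectivity of $\kappa$ --- are precisely the details the paper leaves implicit.
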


The converse of proposition \ref{functor} is the following.

\begin{theorem}
\label{cong_pair}
Let $N$ be a normal inverse subsemigroup of $S$ and let $\psi : S \twobar N \ra Q$ be a surjective, star-injective functor to
an inverse semigroup $Q$.  Let $\nu$ be the congruence on $E(S)$ determined by the composition
\[ E(S) \ra S \twobar N \ra E(Q) \,.\]
Then $(N,\nu)$ is a congruence pair, and the associated congruence $\rho_{(N,\nu)}$ on $S$ is that determined by the composition
\[ \phi: S \stackrel{\pi}{\longra} S \twobar N \stackrel{\psi}{\longra} Q \,.\]
\end{theorem}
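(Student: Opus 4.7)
The plan is to recognize this theorem as a converse to Proposition \ref{functor}, so the central task is to show that the kernel and trace of the congruence induced on $S$ by $\phi = \psi \circ \pi$ are precisely $N$ and $\nu$, after which the congruence-pair axioms and the identification with $\rho_{(N,\nu)}$ fall out.

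First I would establish the key lemma that $\ker \phi = \{s \in S : s\phi \in E(Q)\}$ equals $N$. One inclusion is immediate from Proposition \ref{simeq_props}(a): every $n \in N$ satisfies $n \simeq_N nn^{-1}$, so $[n]_N$ is an identity in $S \twobar N$ and $n\phi \in E(Q)$. For the reverse inclusion, suppose $s\phi \in E(Q)$. Then $[s]_N \psi = [ss^{-1}]_N \psi$ because both are equal to the source of $[s]_N \psi$ in $Q$. Since $[s]_N$ and $[ss^{-1}]_N$ lie in $\st_{S \twobar N}([ss^{-1}]_N)$, star-injectivity of $\psi$ forces $[s]_N = [ss^{-1}]_N$, and Proposition \ref{simeq_props}(b) then gives $s \in N$.

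Next I would verify that $\nu$, which by construction is the kernel congruence of the map $E(S) \ra E(Q)$, is normal: if $e \, \nu \, f$ then $(s^{-1}es)\phi = (s\phi)^{-1}(e\phi)(s\phi) = (s\phi)^{-1}(f\phi)(s\phi) = (s^{-1}fs)\phi$, so $s^{-1}es \, \nu \, s^{-1}fs$. For the congruence-pair conditions: \eqref{conpair2} is immediate, since for $u \in N$ the element $u\phi$ is idempotent, forcing $(uu^{-1})\phi = u\phi = (u^{-1}u)\phi$. For \eqref{conpair1}, given $se \in N$ and $s^{-1}s \, \nu \, e$, I compute $s\phi = (s\phi)(s^{-1}s)\phi = (s\phi)(e\phi) = (se)\phi \in E(Q)$, so $s \in N$ by the lemma above.

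Finally I would show that $\rho_{(N,\nu)}$ coincides with the congruence $s \sim t \iff s\phi = t\phi$ induced by $\phi$. The forward direction is routine: if $s\phi = t\phi$ then $(st^{-1})\phi = (s\phi)(s\phi)^{-1} \in E(Q)$, so $st^{-1} \in N$, and the equality of $s^{-1}s$ and $t^{-1}t$ traces is clear. The reverse direction is the main obstacle, as it requires deriving $s\phi = t\phi$ from $st^{-1} \in N$ together with $(s^{-1}s)\phi = (t^{-1}t)\phi$. Here I would set $e = (st^{-1})\phi \in E(Q)$ and observe that
\[ e \cdot s\phi = (s\phi)(t\phi)^{-1}(s\phi) = (s\phi)(s^{-1}s)\phi\,((s\phi)^{-1} \text{ dropped}) \]
is better handled by noting $e = e^{-1}$ gives $(s\phi)(t\phi)^{-1} = (t\phi)(s\phi)^{-1}$, so multiplying on the right by $s\phi$ and using $(s\phi)^{-1}(s\phi) = (t\phi)^{-1}(t\phi)$ yields $e \cdot s\phi = t\phi$, hence $t\phi \leq s\phi$ in the natural partial order. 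Because $N$ is an inverse subsemigroup, $ts^{-1} = (st^{-1})^{-1} \in N$ as well, and the symmetric argument gives $s\phi \leq t\phi$, so $s\phi = t\phi$. This completes the identification $\rho_{(N,\nu)} = \ker \phi$ as congruences on $S$.
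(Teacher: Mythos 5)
Your proposal is correct and follows essentially the same route as the paper: verify the congruence-pair axioms via star-injectivity of $\psi$ (your explicit lemma that $\{s : s\phi \in E(Q)\} = N$ is exactly the step the paper uses implicitly in checking \eqref{conpair1}), then identify $\rho_{(N,\nu)}$ with the relation induced by $\phi$ by direct computation in both directions. The only cosmetic difference is in the final step, where the paper derives $s\phi \leq t\phi$ from $t\phi \geq (st^{-1})\phi(t\phi) = (st^{-1}t)\phi = s\phi$ while you use the symmetry $e = e^{-1}$ of the idempotent $(st^{-1})\phi$ to get $t\phi \leq s\phi$; both rely on the same implicit multiplicativity of $\phi$ and conclude by symmetry.
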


\begin{proof}
It is clear that $\nu$ is a normal congruence on $E(S)$.  Suppose that $e \in E(S), s \in S, se \in N$ and $(s^{-1}s) \phi = e \phi$.
Then 
\[ s \phi = (ss^{-1}s) \phi = (s \phi)(s^{-1}s) \phi = (s \phi)(e \phi) = (se) \phi \]
and $(se) \phi \in E(Q)$ since $se \in N$.  Hence $s \phi = s \pi \nu \in E(Q)$.  Since $\nu$ is star-injective, then $s \pi \in E(S \twobar N)$
and so $s \in N$ by part (c) of Proposition \ref{simeq_props}. 

Now if $u \in N$ then $u \phi = u \pi \psi \in E(Q)$ and so
\[ u \phi = u^{-1} \phi = (uu^{-1}) \phi  = (u^{-1}u) \phi \]
and therefore $uu^{-1} \, \nu \, u^{-1}u$.  This confirms that $(N,\nu)$ is a congruence pair.

If $s,t \in S$ and $s \phi = t \phi$ then $(st^{-1})\phi = (st^{-1}) \pi \psi \in E(Q)$.  Since $\psi$ is star-injective, then
$(st^{-1})\pi \in E(S \twobar N)$ and again $s \in N$ by part (c) of Proposition \ref{simeq_props}.  Moreover,
$(s^{-1}s) \phi = (s \phi)^{-1}(s \phi) = (t \phi)^{-1}(t \phi) = (t^{-1}t)\phi$ and so $s^{-1}s \, \nu \, t^{-1}t$.
Therefore $s \, \rho_{(N,\nu)} \, t$.  Conversely, if $s \, \rho_{(N,\nu)} \, t$ then $st^{-1} \in N$ and $s^{-1}s \, \nu \, t^{-1}t$.
Then $(st^{-1})\phi = (st^{-1}) \pi \psi \in E(Q)$, since $(st^{-1}) \pi \in E(S \twobar N)$, and so
\[ t \phi \geq (st^{-1})\phi (t \phi) = (st^{-1}t) \phi = (ss^{-1}s)\phi = s \phi \,,\]
and by symmetry, $t \phi = s \phi$.
\end{proof}

Howie \cite[Exercise 5.11.16]{HoBook} defines a full inverse semigroup $N$ of an inverse semigroup $S$ to have the \emph{kernel
property} if, whenever $s,t \in S$ with $st \in N$ and $n \in N$ then $snt \in N$.  A full inverse subsemigroup with the
kernel property is called \emph{normal} in \cite{Gr}.  It is  easy to see that an inverse subsemigroup wth the kernel property
is normal in the sense of \cite{Pe} (the sense used in this paper), and that the kernel of any congruence has the
kernel property.  Moreover, an inverse subsemigroup with the kernel property is the kernel of its syntactic congruence, and so:

\begin{theorem}[\cite{Gr}, Theorem 3.3]
\label{ker_prop_iff_kernel}
An inverse subsemigroup $N$ of an inverse semigroup $S$ is the kernel of a congruence if and only if it is full and
has the kernel property.
\end{theorem}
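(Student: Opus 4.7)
My plan is to prove the two directions separately. For the forward direction, let $N = \ker\rho$ for a congruence $\rho$. Fullness is immediate since $e\,\rho\,e$ places every $e \in E(S)$ in $\ker\rho$. For the kernel property, given $st, n \in N$, working in $S/\rho$ we have $\rho(s)\rho(t) = \rho(st)$ and $\rho(n)$ both idempotent, and I would invoke the following general fact: \emph{in any inverse semigroup $T$, if $xy$ and $e$ are idempotent, then $xey$ is idempotent.} Applied in $S/\rho$, this yields $\rho(snt)$ idempotent, hence $snt \in \ker\rho = N$. To prove the lemma, I would expand $(xey)^{2} = x(e \cdot yx \cdot e)y$ and use the identity $(x^{-1}x)(yx)(yy^{-1}) = (x^{-1}x)(yy^{-1})$, a consequence of $(xy)^{2} = xy$ obtained by pre- and post-multiplication by $x^{-1}$ and $y^{-1}$; combined with the commutativity of idempotents and the absorptions $x(x^{-1}x) = x$, $(yy^{-1})y = y$, the right-hand side collapses to $xey$.

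For the reverse direction, suppose $N$ is full with the kernel property. I would introduce the \emph{syntactic congruence of $N$}, namely $\rho_{N}$ defined by $s \, \rho_{N} \, t$ if and only if $usv \in N \iff utv \in N$ for all $u, v \in S^{1}$. A standard substitution argument shows $\rho_{N}$ is a congruence; it remains to verify $\ker\rho_{N} = N$. The inclusion $\ker\rho_{N} \subseteq N$ is easy: if $s \, \rho_{N} \, e$ with $e \in E(S)$, setting $u = v = 1$ in the defining condition gives $s \in N \iff e \in N$, and fullness of $N$ provides $e \in N$.

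For $N \subseteq \ker\rho_{N}$, I would show that every $n \in N$ satisfies $n \, \rho_{N} \, (nn^{-1})$. Given $u, v \in S^{1}$, the argument splits into two applications of the kernel property. If $unv \in N$, factor the product as $(un) \cdot v$ and apply the kernel property with $n^{-1} \in N$ to obtain $(un) n^{-1} v = u(nn^{-1})v \in N$. Conversely, if $u(nn^{-1})v \in N$, factor as $(unn^{-1}) \cdot v$ and apply the kernel property with $n \in N$ to obtain $(unn^{-1}) n v = u(nn^{-1}n)v = unv \in N$, using the absorption $nn^{-1}n = n$.

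The hard part will be the inverse-semigroup lemma used in the forward direction. Its proof is elementary but not transparent: recognizing that $(xy)^{2} = xy$ yields the key identity $(x^{-1}x)(yx)(yy^{-1}) = (x^{-1}x)(yy^{-1})$, and then shuffling the idempotents past $yx$ using only commutativity of $E(T)$ and the absorption rules, takes some care. The remainder of the argument is essentially formal; the choice of $nn^{-1}$ (rather than $n^{-1}n$) in the reverse direction is guided by the absorption $nn^{-1} \cdot n = n$, which is needed to close the second calculation.
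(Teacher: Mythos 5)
Your proof is correct and follows the same route the paper indicates: the theorem is cited from Green, and the paper's preceding sketch is exactly your argument (the kernel of a congruence has the kernel property; a full inverse subsemigroup with the kernel property is the kernel of its syntactic congruence), which you have filled in, including a valid proof of the auxiliary fact that $xy,e\in E(T)$ implies $xey\in E(T)$ (which can also be seen more quickly from $xey=(xex^{-1})(xy)$, a product of idempotents). The only details worth tightening are the degenerate cases $u=1$ or $v=1$ in the syntactic-congruence verification, where the kernel property as stated requires $s,t\in S$; these cases follow directly from $N$ being an inverse subsemigroup.
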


Hence, if $\simeq_N$ is a congruence, $N$ must have the kernel property.

\begin{theorem}
\label{simeq_is_minl}
If $N$ is a full inverse subsemigroup of an inverse semigroup $S$ and $N$ has the kernel property, then $\simeq_N$ is a 
congruence on $S$ if and only if $\J_N$ is a normal congruence on $E(S)$, and  $\simeq_N$ is then the
minimal congruence on $S$ with kernel $N$.
\end{theorem}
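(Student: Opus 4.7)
The strategy is to identify $\simeq_N$ with the Petrich congruence $\rho_{(N,\J_N)}$ associated to the congruence pair $(N,\J_N)$; once this identification is established, both the congruence property and the minimality claim follow at once. The forward direction is short: if $\simeq_N$ is a congruence on $S$, then Proposition~\ref{ker_tr_simeq} identifies its trace as $\J_N$, and the trace of any congruence on $S$ is a normal congruence on $E(S)$ (standard, per Petrich \cite{Pe}), so $\J_N$ is a normal congruence on $E(S)$.

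For the converse, suppose $N$ has the kernel property and $\J_N$ is a normal congruence on $E(S)$. I first verify that $(N,\J_N)$ is a congruence pair. Condition~\eqref{conpair2} is witnessed by $u$ itself for each $u \in N$. For~\eqref{conpair1}, given $se \in N$ with $s^{-1}s \, \J_N \, e$, pick $x \in N$ with $xx^{-1} = s^{-1}s$ and $x^{-1}x = e$; a short calculation gives $sxe = s(xx^{-1})x = s \cdot s^{-1}s \cdot x = sx$, so the kernel property applied to $se \in N$ with inserted element $x$ yields $sx \in N$, whence $s = (sx)x^{-1} \in N$ since $N$ is closed under multiplication. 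This produces the congruence $\rho = \rho_{(N,\J_N)}$ with kernel $N$ and trace $\J_N$.

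The main step is the identity $\simeq_N = \rho$. The inclusion $\simeq_N \subseteq \rho$ follows immediately from Proposition~\ref{functor}. For the reverse containment, suppose $s \rho t$; since $\rho$ is a congruence on an inverse semigroup, the symmetric form of~\eqref{conpair3} (obtained by replacing $s, t$ by $s^{-1}, t^{-1}$) also yields $s^{-1}t \in N$ and $ss^{-1} \, \J_N \, tt^{-1}$. Pick $c \in N$ with $cc^{-1} = ss^{-1}$ and $c^{-1}c = tt^{-1}$, and set $p = c^{-1}$, $q = s^{-1}ct$. The kernel property applied to $s^{-1}t \in N$ with inserted element $c$ shows $q \in N$, and a direct calculation verifies $p^{-1}p = ss^{-1}$, $qq^{-1} = s^{-1}s$, $pp^{-1} = tt^{-1}$, $q^{-1}q = t^{-1}t$, making $p \cdot s \cdot q$ a valid trace product with $p \cdot s \cdot q = c^{-1}(ss^{-1})ct = c^{-1}ct = tt^{-1}t = t$. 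Hence $s \leq_N t$, and the symmetric construction (using $t^{-1}s \in N$ and the elements $p' = c$, $q' = t^{-1}c^{-1}s$) gives $t \leq_N s$, so $s \simeq_N t$. The main obstacle is spotting suitable witnesses $p, q \in N$: the kernel property is essential for placing $q = s^{-1}ct$ in $N$, since neither normality of $N$ nor the $\J_N$-link alone suffices. With $\simeq_N = \rho$ established, minimality is immediate: for any congruence $\rho'$ on $S$ with kernel $N$, Proposition~\ref{functor} gives $\simeq_N \subseteq \rho'$.
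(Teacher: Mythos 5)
Your overall strategy --- verify that $(N,\J_N)$ is a congruence pair and then identify $\simeq_N$ with $\rho_{(N,\J_N)}$ --- is essentially the paper's, and you are in fact more explicit than the paper at the one point where it is terse: you actually prove the containment $\rho_{(N,\J_N)} \subseteq \simeq_N$, which is what makes $\simeq_N$ itself (and not merely \emph{some} congruence with kernel $N$ and trace $\J_N$) a congruence. The forward direction, the verification of \eqref{conpair2}, and the minimality argument via Proposition \ref{functor} are all fine.

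There is, however, a recurring flaw in the witnesses you choose. Twice you assert that from $e \,\J_N\, f$ (for idempotents $e,f$) you may ``pick $x \in N$ with $xx^{-1}=e$ and $x^{-1}x=f$'' (and likewise for $c$ in the main step). An element realising \emph{both} equalities witnesses $e\,\curlyD\,f$ in $N$, not merely $e\,\J\,f$, and $\curlyD$ and $\J$ need not coincide in $N$: fullness and the kernel property do not force this. What $\J_N$ actually supplies, as in the proof of Proposition \ref{simeq_props}(e), is a pair $m,n\in N$ with $m^{-1}m=e$, $mm^{-1}\le f$ and $n^{-1}n=f$, $nn^{-1}\le e$ --- one equality and one inequality each. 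So the elements you invoke need not exist. Fortunately your computations are robust under the correct, weaker choice: for \eqref{conpair1}, taking $x\in N$ with $xx^{-1}=s^{-1}s$ and only $x^{-1}x\le e$ still gives $xe=x$, hence $sx=sxe\in N$ by the kernel property and $s=(sx)x^{-1}\in N$; and in the main step, taking $c\in N$ with $cc^{-1}=ss^{-1}$ and only $c^{-1}c\le tt^{-1}$ still gives $p^{-1}p=ss^{-1}$, $qq^{-1}=s^{-1}s$ and $p\cdot s\cdot q=c^{-1}ct\le t$, which is all that the definition of $\le_N$ in \eqref{leq_def} requires (you do not need $p\cdot s\cdot q$ to equal $t$, only to lie below it). With those adjustments the argument is correct.
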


\begin{proof}
By Proposition \ref{ker_tr_simeq}, the relation $\simeq_N$ is a congruence on $S$ if and only $(N, \J_N)$ is a
congruence pair.  Now $N$ is normal, and \eqref{conpair2} holds by part (a) of Proposition \ref{simeq_props}

For \eqref{conpair1}, suppose that $se \in N$ and that $s^{-1}s \simeq_N e$.  Then there exists $a \in N$ with
$aa^{-1}  \leq e$ and $a^{-1}a = s^{-1}s$.  Now
\[ se = ss^{-1}se = sess^{-1} \geq saa^{-1}s^{-1} \]
and so $saa^{-1}s^{-1}a \in N$.  Now by two applications of the kernel property using $a^{-1}a \in N$,
\[ sa^{-1}(aa^{-1})as^{-1}s = sa^{-1}as^{-1}s = ss^{-1}ss^{-1}s = s \in N \,.\]
Hence \eqref{conpair1} holds, and therefore $(N,\J_N)$ is a congruence pair if and only if $\J_N$ is a normal
congruence on $E(S)$.

Now if $\rho$ is a congruence with kernel $N$ we have, for $s,t \in S$,
\[ s \simeq_N t \Longrightarrow s \relrho t \,,\]
and so $\simeq_N$ is minimal.
\end{proof}

\subsection{Idempotent separating congruences}
\label{id_sep_cong}
A congruence $\rho$ on an inverse semigroup $S$ is \emph{idempotent separating} if its trace is the identity
relation on $E(S)$.  The classification of congruences by congruence pairs \cite[Theorem 4.4]{Pe} shows that
an idempotent separating congruence is entirely determined by its kernel $K$, a normal inverse subsemigroup of $S$
which, by \eqref{conpair2},  must also satisfy the property that for all $a \in K$, $aa^{-1}=a^{-1}a$.  (Hence $K$ is
a \emph{Clifford} inverse semigroup).  The congruence $\rho$ is then defined, according to  \eqref{conpair3}, by:
\[  \eqnum\label{id_sep_from_kernel} \; s \, \rho \, t \; \iff \; st^{-1} \in K \; \text{and} \; s^{-1}s = t^{-1}t \,.\]

\begin{prop}
\label{idpt-sep}
If $\rho$ is an idempotent-separating congruence on $S$ with kernel $K$ then the relations $\rho$ and $\simeq_K$ are equal, and
so $\kappa : S \twobar K \ra S/\rho$ is an isomorphism of inverse semigroups.
\end{prop}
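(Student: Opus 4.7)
The plan is to verify the two containments $\simeq_K \subseteq \rho$ and $\rho \subseteq \simeq_K$ separately, and then read off the stated isomorphism from the fact that $\kappa$ is bijective. The first containment is already Proposition \ref{functor}, which simultaneously tells us that the induced map $\kappa : S \twobar K \ra S / \rho$, $[s]_K \mapsto \rho(s)$, is a surjective star-injective functor of ordered groupoids.

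For the reverse containment, I would argue as follows.  Suppose $s \relrho t$.  Because $\rho$ is a congruence, $ss^{-1} \relrho tt^{-1}$; because $\rho$ is idempotent separating, its trace is the identity on $E(S)$, so in fact $ss^{-1} = tt^{-1}$.  Hence $[ss^{-1}]_K = [tt^{-1}]_K$, which means that $[s]_K$ and $[t]_K$ share the same domain in $S \twobar K$ and therefore lie in a common star.  Since $\kappa([s]_K) = \rho(s) = \rho(t) = \kappa([t]_K)$, the star-injectivity of $\kappa$ provided by Corollary \ref{hom_factors} forces $[s]_K = [t]_K$, i.e.\ $s \simeq_K t$.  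This gives $\rho \subseteq \simeq_K$ and hence $\rho = \simeq_K$.

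With the two relations coinciding, $\kappa$ is a bijective order-preserving functor from the ordered groupoid $S \twobar K$ to the inductive groupoid $S/\rho$.  Its inverse is automatically a morphism of ordered groupoids, so $S \twobar K$ inherits an inductive structure from $S/\rho$ and $\kappa$ becomes an isomorphism of inverse semigroups via the Ehresmann--Schein--Nambooripad correspondence. The main (and only) subtle point is the use of star-injectivity in the reverse containment: we need the domains of $[s]_K$ and $[t]_K$ to be \emph{literally equal} in $S \twobar K$, not merely $\simeq_K$-equivalent representatives, and this is precisely what the idempotent-separating hypothesis delivers via $ss^{-1} = tt^{-1}$; everything else is formal packaging.
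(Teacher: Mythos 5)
Your argument is correct, but the containment $\rho\subseteq\;\simeq_K$ is obtained by a genuinely different route from the paper's. The paper works directly from the congruence-pair description \eqref{id_sep_from_kernel} of an idempotent-separating congruence: from $s\,\rho\,t$ it has $st^{-1}\in K$ and $s^{-1}s=t^{-1}t$, checks that $st^{-1}\cdot t\cdot t^{-1}t\leq s$ is a trace-product decomposition with factors in $K$, concludes $[t]_K\leq[s]_K$, and finishes by symmetry. You instead observe that idempotent separation forces $ss^{-1}=tt^{-1}$, so $[s]_K$ and $[t]_K$ lie in a common star of $S\twobar K$, and then invoke the star-injectivity of $\kappa$ from Corollary \ref{hom_factors}; this delegates the element computation to the proof of that corollary (whose star-injectivity argument is, in substance, the same calculation the paper redoes here) and is arguably the cleaner packaging. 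Two small caveats. First, your closing remark is slightly off: equality of the domains of $[s]_K$ and $[t]_K$ in $S\twobar K$ means exactly $ss^{-1}\simeq_K tt^{-1}$, so ``literally equal, not merely $\simeq_K$-equivalent'' is not the right dichotomy --- what the idempotent-separating hypothesis actually buys is that $s\,\rho\,t$ yields \emph{any} usable relation between $ss^{-1}$ and $tt^{-1}$ (namely equality in $S$), since $ss^{-1}\,\rho\,tt^{-1}$ alone would not give $ss^{-1}\simeq_K tt^{-1}$. Second, ``the inverse of a bijective order-preserving functor is automatically order-preserving'' is not true for general posets and deserves a word (e.g.\ via Lallement's Lemma: if $\rho(s)\leq\rho(t)$ then $s\,\rho\,et$ for some $e\in E(S)$, whence $[s]_K=[et]_K\leq[t]_K$); the paper glosses over this point as well, so it is not a defect relative to the source, but it is the one place where ``formal packaging'' hides a genuine check.
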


\begin{proof}
If $s \relrho t$ then 
$st^{-1} \in K$ and since $s^{-1}s = t^{-1}t$, we see that $st^{-1}$ is a trace product  in $(S,\cdot)$. Hence $s \cdot t^{-1} \cdot t$ is also a trace product in $S$, and $st^{-1}t \leq s$.  Since $st^{-1}$ in $K$, this shows that $[t]_K \leq [s]_K$.  By symmetry,
they are equal (or we can repeat the argument using $ts^{-1} \in K$ and $ts^{-1}s \leq t$).  So if $\rho$ is idempotent-separating then $s \relrho t$ implies that $s \simeq_K t$, and Proposition \ref{functor} gives the reverse
implication.
\end{proof}

\begin{remark}
The converse of this result is not true: see Example \ref{simple_x_grp} below.
\end{remark}

\subsection{Closed inverse subsemigroups}
\label{cliss}
For a subset $A$ of an inverse semigroup $S$, we denote by $\nobra{A}$ the smallest closed subset of
$S$ containing $A$.  If $A$ is an inverse subsemigroup of $S$, then so is $\nobra{A}$.

Let $N$ be a \emph{closed} inverse subsemigroup of $S$: so if $n \in N$ and $n \leq s$ then $s \in N$.
The relation $a \equiv_N b \; \iff \; ab^{-1} \in N$ is then an equivalence relation on the subset
$\st_S(N) = \{ s \in S : ss^{-1} \in N \}$ and the equivalence classes are the \emph{cosets} of $N$.  This notion of 
coset was introduced by Schein \cite{Sch}.  If $N$ is normal, then $\st_S(N)=S$ and $\equiv_N$ is an
equivalence relation on $S$ and it is easy to see that it is then also a congruence, with kernel $N$.  If $s,t \in S$ and there exists $e \in E(S)$
with $es=et$ then $est^{-1}e \in E(S) \subseteq N$ and, since $N$ is closed, $st^{-1} \in N$.  It follows that $\equiv_N$
contains the minimal group congruence $\sigma$ on $S$, and if $\sigma_* : S \ra S / \sigma$ then $S / \! \equiv_N$ is
isomorphic to the quotient group $(S / \sigma)/N \sigma_*$.  

The relation $\simeq_N$ is finer than $\equiv_N$:

\begin{prop}
\label{quots_by_N}
Let $N$ be a closed normal inverse subsemigroup of $S$.  Then for all $s,t \in S$, $s \simeq_N t$ implies that
$s \equiv_N t$, and $s \equiv_N t$ if and only if $st^{-1}t \simeq_N ts^{-1}s$.
\end{prop}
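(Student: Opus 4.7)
The plan is as follows. Part (i) is essentially immediate from Lemma \ref{leq_props}(f): if $s \simeq_N t$ then in particular $s \leq_N t$, and the lemma gives $st^{-1} \in N$, which is exactly $s \equiv_N t$.

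The heart of the argument is the observation that the ``quotient'' $(st^{-1}t)(ts^{-1}s)^{-1}$ collapses to $st^{-1}$ itself, by the inverse-semigroup identity $xx^{-1}x = x$ applied to $x = st^{-1}$. Concretely,
\[
(st^{-1}t)(ts^{-1}s)^{-1} \;=\; (st^{-1}t)(s^{-1}st^{-1}) \;=\; s\,(t^{-1}t)(s^{-1}s)\,t^{-1} \;=\; s(s^{-1}s)(t^{-1}t)t^{-1} \;=\; st^{-1},
\]
using commutativity of idempotents together with $s(s^{-1}s)=s$ and $(t^{-1}t)t^{-1}=t^{-1}$. This is the key computation and explains why the statement is plausible: on the projected pair $st^{-1}t,\,ts^{-1}s$, the relation $\equiv_N$ knows exactly the same thing as the defining condition $st^{-1} \in N$.

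With this observation in hand, the $(\Leftarrow)$ direction of (ii) is immediate: from $st^{-1}t \simeq_N ts^{-1}s$, part (i) gives $(st^{-1}t)(ts^{-1}s)^{-1} = st^{-1} \in N$, so $s \equiv_N t$. For $(\Rightarrow)$, I would exhibit explicit trace products witnessing $\simeq_N$. Assuming $st^{-1} \in N$, take $a = ts^{-1} \in N$ (since $N$ is closed under inverses) and $b = (s^{-1}s)(t^{-1}t) \in E(S) \subseteq N$ (since $N$ is full). I then check the trace-product conditions $a^{-1}a = s(t^{-1}t)s^{-1} = (st^{-1}t)(st^{-1}t)^{-1}$ and $(st^{-1}t)^{-1}(st^{-1}t) = (s^{-1}s)(t^{-1}t) = b = bb^{-1}$, and a routine computation using idempotent commutativity collapses $a \cdot (st^{-1}t) \cdot b$ to $ts^{-1}s$; hence $st^{-1}t \leq_N ts^{-1}s$. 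The reverse inequality $ts^{-1}s \leq_N st^{-1}t$ is obtained symmetrically using $a' = st^{-1} \in N$ with the same $b$. The only real point of care is the trace-product bookkeeping, but once the witnesses $a, a', b$ are identified the identities collapse mechanically; I expect no genuine obstacle beyond this.
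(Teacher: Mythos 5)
Your proposal is correct and takes essentially the same route as the paper: part (i) via Lemma \ref{leq_props}(f), the converse of (ii) via the collapse $(st^{-1}t)(ts^{-1}s)^{-1}=st^{-1}$ combined with part (i), and the forward direction via the same trace-product witnesses $st^{-1}$ and $ts^{-1}$ (the paper merely leaves the right-hand idempotent $b$ implicit).
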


\begin{proof}
If $s \simeq_N t$ then $s \equiv_N t$ by part (f) of Lemma \ref{leq_props}.
Now if $st^{-1} \in N$ we have $st^{-1} \cdot ts^{-1}s = st^{-1}t$ and $ts^{-1} \cdot st^{-1}t = ts^{-1}s$,
and so $st^{-1}t \simeq_N ts^{-1}s$.  Conversely, if $st^{-1}t \simeq_N ts^{-1}s$, then by the first part
$st^{-1} = (st^{-1}t)(s^{-1}st^{-1}) \in N$ and so $s \equiv_N t$. 
 \end{proof}

\begin{example}
\label{simple_x_grp}
Let $T$ be any inverse semigroup and let $G$ be a group: set $S = T \times G$.  We identify $T$ with  $T_1 = \{ (t,1_G) : t \in T \}$,
which is a closed, normal, inverse subsemigroup of $S$.  Then $(u,g) \equiv_T (v,h)$ if and only $g=h$, so that
$S/ \! \equiv_T$ is isomorphic to $G$.  However, by part (f) of Proposition \ref{simeq_props}, we have
\[ (u,g) \simeq_T (v,h) \iff u \J t \; \text{and} \; g=h \]
and so $S \twobar T \cong (T/ \J) \times G$.  Hence $\kappa: S \twobar T \ra S / \equiv_T$ is an isomorphism if and only if
$T$ is simple.

If $T$ is not a group, then $\equiv_T$ is not idempotent separating, and so the converse of Proposition \ref{idpt-sep} is
not true in general.
\end{example}

\section{Inverse monoid presentations}
Let $\P = \langle X:R \rangle$ be  a presentation of the inverse monoid $M$.  We assume that $R$ consists of a set
of pairs $(\ell,r)$ with $\ell,r \in \fim(X)$, the free inverse monoid on $X$.  The pairs in $R$ generate a congruence 
$\simeq_{\P}$ on $\fim(X)$ with $M$ isomorphic to the quotient $\fim(X) / \simeq_{\P}$.  We let
$\pi : \fim(X) \ra M$ denote the quotient map.

Let $K(\P)$ be the kernel of $\simeq_{\P}$.  We note that $K(\P)$ is the image in $\fim(X)$ of the idempotent problem
(see \cite{GiNH}) of $\P$ in $(X \sqcup X^{-1})^\ast$.  By Theorem \ref{ker_prop_iff_kernel} $K(\P)$ is a full
inverse subsemigroup of $\fim(X)$ with the kernel property, and hence is normal.

\begin{prop}
\label{comp_KandN}
Let $N(\P)$ be the smallest normal inverse subsemigroup of $\fim(X)$ containing the set
\[ Q(R) = \{ \ell^{-1}r, \ell r^{-1} : (\ell,r) \in R \} \,. \] 
Then 
\[ N(\P) \subseteq K(\P) \subseteq \nobra{N(\P)} \,.\]
\end{prop}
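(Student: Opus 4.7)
The plan is as follows. The easy inclusion $N(\P) \subseteq K(\P)$ comes immediately from the observation that each generator of $N(\P)$ already sits in $K(\P)$, combined with Theorem \ref{ker_prop_iff_kernel}. The harder inclusion $K(\P) \subseteq \nobra{N(\P)}$ I would attack by showing that $\nobra{N(\P)}$ is saturated under the congruence $\simeq_\P$, and then noting that every element of $K(\P)$ is $\simeq_\P$-equivalent to an idempotent, which already lies in $N(\P)$ by fullness.

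For the first inclusion: given $(\ell,r) \in R$ we have $\ell \simeq_\P r$, so $\ell^{-1}r \simeq_\P \ell^{-1}\ell$ and $\ell r^{-1} \simeq_\P \ell\ell^{-1}$, both idempotents of $\fim(X)$. Thus $Q(R) \subseteq K(\P)$; and as $K(\P)$ is itself a normal inverse subsemigroup by Theorem \ref{ker_prop_iff_kernel}, it must contain the smallest such, namely $N(\P)$.

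For the saturation step I would invoke the standard description of $\simeq_\P$ as the reflexive-transitive-symmetric closure of one-step substitutions $w_1 \alpha w_2 \leftrightarrow w_1 \beta w_2$, where $(\alpha,\beta)$ ranges through $R$ together with the pairs $(\alpha^{-1}, \beta^{-1})$ obtained by inverting both sides. A routine induction on derivation length then reduces the inclusion to a single lemma: every elementary step preserves membership in $\nobra{N(\P)}$. Concretely, given $(\ell, r) \in R$ and $n \in N(\P)$ with $n \leq w_1 \ell w_2$, the candidate witness for $w_1 r w_2 \in \nobra{N(\P)}$ is
\[ n' := n \cdot w_2^{-1} \ell^{-1} r w_2. \]
The factor $\ell^{-1} r$ lies in $Q(R)$, and normality of $N(\P)$ then places $w_2^{-1}(\ell^{-1}r)w_2$, and hence $n'$, inside $N(\P)$; compatibility of the natural partial order with multiplication, together with $w_2 w_2^{-1} \leq 1$ and $\ell\ell^{-1} \leq 1$ in $\fim(X)$, delivers $n' \leq w_1 r w_2$. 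The three remaining flavours of elementary step are handled identically using $r^{-1}\ell,\ \ell r^{-1},\ r \ell^{-1} \in Q(R)$.

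The main obstacle I anticipate is not the algebraic calculation, which is essentially forced, but rather confirming that the rewriting description of $\simeq_\P$ on $\fim(X)$ is legitimate and that all four directions of elementary step are handled uniformly. The symmetric definition of $Q(R)$, together with $N(\P)$'s closure under inversion, is exactly what makes the inductive step go through in every case.
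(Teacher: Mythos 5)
Your proposal is correct and follows essentially the same route as the paper: both reduce the inclusion $K(\P)\subseteq\nobra{N(\P)}$ to the elementary rewriting steps generating $\simeq_{\P}$ and exhibit a conjugate of a relator in $Q(R)$ as the witness (the paper conjugates $r\ell^{-1}$ on the left by $p_i$ where you conjugate $\ell^{-1}r$ on the right by $w_2$, and it phrases the conclusion as $v\geq nu$ rather than as saturation of $\nobra{N(\P)}$, but these are the same calculation). The first inclusion is likewise the paper's argument, stated via minimality of $N(\P)$ instead of by describing its elements explicitly.
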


\begin{proof}
Elements of $N(\P)$ are products of conjugates of elements of $Q(R)$ and their inverses and idempotents in $\fim(X)$
Since each element of $Q(R)$ is mapped by $\pi$ to an idempotent of $M$ we therefore have $N(\P) \subseteq K(\P)$.  

Suppose now that $u \simeq_{\P} v$: then there exists $u=u_0, u_1, \dotsc , u_{k-1}, u_k=v$ such that, for all $i$ with
$0 \leq i \leq k-1$, there exist $p_i, q_i \in \fim(X)$ such that $u_i = p_i \ell q_i$ and $u_{i+1} = p_irq_i$, or vice versa.  Assume the former: then
\[ p_irq_i \geq p_ir \ell^{-1} p_i^{-1}p_i \ell q_i \]
with $p_ir \ell^{-1} p_i^{-1} \in N$.  Hence $u_{i+1} \geq n_iu_i$ and so, for some $n \in N$, we have $v \geq nu$.
Hence if for some $e \in E(\fim(X))$ we have $e \simeq _{\P} v$, that is if $v \in K(\P)$, then $v \geq ne$ with $ne \in N$
and so $v \in \nobra{N}$.
\end{proof}

\begin{prop}
\label{Eunit_has_closed_ker}
The monoid $M$ is $E$--unitary if and only if $K(\P)$ is a closed inverse submonoid of $\fim(X)$, in which
case $K(\P) = \nobra{N(\P)}$.
\end{prop}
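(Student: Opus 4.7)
The plan is to prove both directions by exploiting the basic fact that the quotient map $\pi : \fim(X) \ra M$ preserves the natural partial order (since inverse semigroup homomorphisms preserve $\leq$), and then to combine $E$-unitarity with the inclusions already established in Proposition \ref{comp_KandN}.

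For the forward direction, I would assume $M$ is $E$--unitary and show $K(\P)$ is closed. Take $n \in K(\P)$ and $s \in \fim(X)$ with $n \leq s$. Then $n \pi \in E(M)$, and applying $\pi$ to $n \leq s$ gives $n\pi \leq s\pi$. Since $M$ is $E$--unitary, $s \pi \in E(M)$, and therefore $s \in K(\P)$. So $K(\P)$ is closed.

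For the reverse direction, suppose $K(\P)$ is closed and take $e \in E(M)$ and $m \in M$ with $e \leq m$. Choose any lift $\tilde m \in \fim(X)$ with $\tilde m \pi = m$, and form the element $t = e' \cdot \tilde m$ in $\fim(X)$, where $e'$ is any idempotent of $\fim(X)$ mapping to $e$. Then $t \leq \tilde m$ in $\fim(X)$ since $e'$ is idempotent, while $t \pi = e \cdot m = e \in E(M)$, so $t \in K(\P)$. By closure of $K(\P)$, $\tilde m \in K(\P)$, so $m = \tilde m \pi \in E(M)$, proving that $M$ is $E$--unitary. The key (and easy) trick here is the choice of $t = e'\tilde m$, which both lies below $\tilde m$ and lifts the idempotent $e$.

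For the final statement, assume $M$ is $E$--unitary so that $K(\P)$ is closed. From Proposition \ref{comp_KandN} we already know $N(\P) \subseteq K(\P) \subseteq \nobra{N(\P)}$. Since $\nobra{N(\P)}$ is by definition the smallest closed subset containing $N(\P)$, and $K(\P)$ is a closed subset containing $N(\P)$, we obtain $\nobra{N(\P)} \subseteq K(\P)$. Combining the two inclusions gives $K(\P) = \nobra{N(\P)}$.

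The only step that requires any care is the reverse direction, and there the potential obstacle is just making sure the lift of $e \leq m$ to $\fim(X)$ actually produces an element of $K(\P)$ that lies below a lift of $m$; the product $e' \tilde m$ does this cleanly. All other steps are immediate from order-preservation of $\pi$, the definition of closedness, and Proposition \ref{comp_KandN}.
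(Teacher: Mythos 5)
Your proof is correct, and your reverse direction takes a genuinely different route from the paper's. The forward direction and the final equality $K(\P) = \nobra{N(\P)}$ coincide with the paper's argument: order-preservation of $\pi$ plus $E$-unitarity gives closure (the paper invokes Lallement's Lemma to land back in $K(\P)$, which for inverse semigroups is the trivial observation that $s\pi \in E(M)$ forces $s \simeq_{\P} ss^{-1}$, exactly as you use it), and minimality of the closure $\nobra{N(\P)}$ combined with Proposition \ref{comp_KandN} gives the equality. For the converse, however, the paper first establishes $K(\P)=\nobra{N(\P)}$, passes to the congruence $\equiv_{K}$ of section \ref{cliss}, identifies $\fim(X)/\!\equiv_K$ with the maximal group image $\widehat{M}$, and uses Proposition \ref{functor} to build a commuting square of star-injective functors, concluding that $\sigma_* : M \ra \widehat{M}$ is star-injective and hence that $M$ is $E$-unitary. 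You instead argue directly: given $e \leq m$ in $M$ with $e$ idempotent, lift to $t = e'\tilde m \leq \tilde m$ in $\fim(X)$ with $t\pi = em = e$, so $t \in K(\P)$, and closure forces $\tilde m \in K(\P)$, i.e.\ $m \in E(M)$. Your argument is shorter, entirely elementary, and avoids the machinery of Corollary \ref{hom_factors} and the coset congruence; the paper's detour buys the additional structural information of the commuting square relating $\fim(X) \twobar K$, $M$, and $\widehat{M}$, which ties the proposition to the themes of sections \ref{congruences} and \ref{cliss}. Both are valid; yours is the more economical proof of the bare statement.
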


\begin{proof}
Suppose that $M$ is $E$--unitary, that $u \geq v$ in $\fim(X)$ and that $v \in K(\P)$.  Then $v \pi \in E(M)$ and since 
$u \pi \geq v \pi$ we have
$u \pi \in E(M)$.  By Lallement's Lemma \cite[Lemma 2.4.3]{HoBook}, there exists $e \in E(\fim(X))$ with
$e \pi = u \pi$, and so $u \in K(\P)$.

Conversely, suppose that $K=K(\P)$ is closed.  Then by Proposition \ref{comp_KandN}, we have $K= \nobra{N(\P)}$.
As in section \ref{cliss}, the relation $u \equiv_K v \Longleftrightarrow uv^{-1} \in K$ is a congruence on $\fim(X)$
with kernel $K$, and the quotient $\fim(X) / \! \equiv_K$ is isomorphic to the quotient group $F(X)/N(\P) \sigma_*$,
where $F(X)$ is the free group on $X$ and $\sigma_*$ is the canonical map $\fim(X) \ra F(X)$.  This quotient group is the
maximal group image $\widehat{M}$ of $M$.  By Proposition \ref{functor} there are surjective star-injective functors
$\fim(X) \twobar K \ra  \fim(X) / \! \equiv_K$ and $\fim(X) \twobar K \ra M$ making the square
\[ \xymatrixcolsep{3pc}
\xymatrix{
\fim(X) \twobar K \ar[d]  \ar[r] & M  \ar[d]^{\sigma_*}\\
\fim(X) / \! \equiv_K  \ar[r]_(.65){\cong}  & \widehat{M}}
\]
commute.  It follows that $\sigma_* : M \ra \widehat{M}$ must also be star-injective, and this is equivalent
to $M$ being $E$-unitary (see, for example, \cite[Theorem 2.4.6]{LwBook}).
\end{proof}

\begin{example}  \leavevmode \label{K_not_closed}
\begin{enumerate}
\item Take $M = \I_2$ with $\tau = \mat{2}{1}$ and $\varep = \mat{1}{\ast}$.
Let $X = \{ t,e \}$, and let $\P$ be a presentation of $\I_2$ with generating set $X$, with $t\pi=\tau$ and $e \pi = \varep$.
Since $\I_2$ is not $E$--unitary, $K$ is not closed.  Indeed, $(e^{-1}ete)\pi = 0$ but $(te)\pi = \mat{2}{\ast}$,
and so $e^{-1}ete \in K(\P)$ but $te \not\in K(\P)$.  In consequence, $N(\P)$ here is not closed.
\item We note that the closure $\nobra{N}$ of $N = N(\P)$ does not determine $\simeq _{\P}$.  Consider the free inverse
monoid $M$ on two commuting generators \cite{McAMcF}, presented by $\P = \< a,b : ab=ba \>$.  Then $baba^{-1}b^{-1}b^{-1} \in N$
and so $u = baba^{-1}b^{-1}$ and $v=b$ lie in the same coset of $\nobra{N}$ in $\fim(X)$, but $u \ne v$ in $M$.  This is verified
by mapping $M \ra \I_2$ by
\[ a \mapsto \mat{1}{\ast} \; \text{and} \; b \mapsto \mat{\ast}{2} \,.\]
Then $u$ maps to $0$ but $v$ does not.  We note that in this case, $M$ is $E$--unitary \cite[Proposition 2.4]{McAMcF} , and so $K(\P) = \nobra{N}$.
\end{enumerate}
\end{example}

\end{document}